\newcommand{\bcen}{\begin{center}}     \newcommand{\ecen}{\end{center}}
\newcommand{\bay}{\begin{array}}      \newcommand{\eay}{\end{array}}
\newcommand{\beq}{\begin{eqnarray*}}      \newcommand{\eeq}{\end{eqnarray*}}
\def\Hom{\mathrm{Hom}}
\def\Ker{\mathrm{Ker}}
\def\Im{\mathrm{Im}}
\def\Coker{\mathrm{Coker}}
\def\End{\mathrm{End}}
\def\mod{\mathrm{mod}}
\def\id{\mathrm{Id}}
\def\lim{\mathrm{lim}}
\def\Coker{\mathrm{Coker}}
\def\add{\mathrm{add}}
\def\brick{\mathrm{brick}}
\def\ebrick{\mathrm{ebrick}}
\def\mbrick{\mathrm{mbrick}}
\begin{document}
	
	\newtheorem{theorem}{Theorem}[section]
	\newtheorem{proposition}[theorem]{Proposition}
	\newtheorem{lemma}[theorem]{Lemma}
	\newtheorem{corollary}[theorem]{Corollary}
	\newtheorem{remark}[theorem]{Remark}
	\newtheorem{example}[theorem]{Example}
	\newtheorem{definition}[theorem]{Definition}
	\newtheorem{question}[theorem]{Question}
	\numberwithin{equation}{section}

	\title{\large\bf
		ICE-closed subcategories and epibricks over recollements}
	
	\author{\large Jinrui Yang and Yongyun Qin* }

	\date{\footnotesize School of Mathematics, Yunnan Normal University, \\ Kunming, Yunnan 650500, China. E-mail:
		qinyongyun2006@126.com
	}
	
	\maketitle
	
	\begin{abstract} Let $(  \mathcal{A^{'}},\mathcal{A},\mathcal{A^{''}},i^\ast,i_\ast,i_!,j_!,j^\ast,j_\ast)$ be a recollement of abelian categories.
We proved that every ICE-closed subcategory (resp. epibrick, monobrick) in $\mathcal{A^{'}}$ or $\mathcal{A^{''}}$
can be extended to an ICE-closed subcategories (resp. epibrick, monobrick) in $\mathcal{A}$,
and the assignment $\mathcal{C}\mapsto j^*(\mathcal{C})$ defines a bijection between certain ICE-closed subcategories in $\mathcal{A}$
and
those in $\mathcal{A}''$.
Moreover, the ICE-closed subcategory $\mathcal{C}$ of $\mathcal{A}$ containing $i_\ast(\mathcal{A^{'}})$ admits a new recollement relative to ICE-closed subcategories $\mathcal{A^{'}}$ and
	$j^\ast(\mathcal{C})$ which induced from the original recollement when $j_!{j^\ast(\mathcal{C})}\subset\mathcal{C}$.
\end{abstract}
	\medskip
	
	{\footnotesize {\bf Mathematics Subject Classification (2020)}:
		16E35; 16G10; 16G20; 18G80.}
	
	\medskip
	
	{\footnotesize {\bf Keywords}: ICE-closed subcategory; epibrick; monobrick; recollement. }
	
	\bigskip
	
	\section{\large Introduction}
	
	\indent\indent
In representation theory of algebras, several classes of subcategories of abelian categories
 have been investigated.
 For example, the notion of torsion class was introduced by Dickson \cite{Dic66}
and was studied in relation with tilting or $\tau$-tilting theory \cite{AIR14, BB80, DIG19}.
Recall that torsion classes are subcategories closed under taking extensions and quotients.
On the other hand,
wide subcategories, which are exact abelian subcategories closed under extensions, are investigated in connection
with torsion classes \cite{AS24, AP22, MS17}, ring epimorphisms \cite{MS17} and stability conditions \cite{BKT14}.

Recently, ICE-closed subcategories of abelian categories were introduced
in \cite{E22}, which are subcategories closed under taking images,
cokernels and extensions. Both torsion classes and wide subcategories are
typical examples of ICE-closed subcategories. Therefore, the notion of
ICE-closed subcategory can be seen as a generation of these two classes.
Moreover, ICE-closed subcategories are studied in relation with rigid modules \cite{E22}, torsion
classes \cite{E23, ES22} and t-structure \cite{Ska23}. We refer to \cite{ES23, HNW24, Han24}
for more discussions on ICE-closed subcategories.
In \cite{E21}, Enomoto considered two other kinds of subcategories, named monobricks
and epibricks. Indeed, a monobrick (resp. epibrick) is a full subcategory whose objects have division endomorphism
rings and every non-zero map between these objects
is either zero or monomorphic (resp. epimorphic).
Monobricks
and epibricks are
generalization of semisimple modules or semibricks, and they provide an effective approach
to investigate torsion-free classes and wide subcategories \cite{E21}.

Recollements of abelian categories are exact sequences of abelian categories
where both inclusion and quotient
 have left and right adjoints. They have been introduced by Beilinson, Bernstein and
 Deligne in the context of triangulated categories \cite{BBD82},
and were studied in the situation of abelian categories by many authors \cite{FP04, GKP21, Psa14, PV14}.
It should be noted that recollements of abelian categories
appear more naturally than those of triangulated categories, because any idempotent element of a ring induces a
recollement situation between module categories. Moreover,
recollements of abelian categories
are very useful in gluing and in reduction, see for example \cite{AKL11, Chen12, GM22, LVY14, Psa14}.
In particular, Zhang investigated
how wide subcategories and semibricks transfer in the recollements of abelian categories \cite{Z17, Z19}.
Following this, we consider the similar
gluing and reduction results for ICE-closed subcategories, epibricks and monobricks.

	\medskip
	
	 {\bf Theorem 1.1.} (Theorem~\ref{3.4}, ~\ref{3.5}, ~\ref{thm-bij} and ~\ref{thm-recoll})
		{\it
	   Let $(  \mathcal{A^{'}},\mathcal{A},\mathcal{A^{''}},i^\ast,i_\ast,i^!,$
$j_!,j^\ast,j_\ast)$ be a recollement
of abelian categories.
		
{\rm (1)} If $\mathcal{C}$ is an ICE-closed subcategory of $\mathcal{A^{'}}$, then $i_\ast(\mathcal{C})$ is an ICE-closed subcategory
of $\mathcal{A}$.
		 	
{\rm (2)} If $j_{\ast}\  (resp. \ j_!)$ is exact and $\mathcal{C}$ is an ICE-closed subcategory of $\mathcal{A}''$, then
$j_{\ast}(\mathcal{C})\  (resp. \ j_!(\mathcal{C}))$ is an ICE-closed subcategory of $\mathcal{A}$.
		 	
{\rm (3)} The functor $j^*$ gives a bijection $$\xymatrix@!=4pc{\{\mathcal{C}\in {_{i_{\ast}(\mathcal{A^{'}})}\mathcal{A}_{ice}}
\ | \  j_!j^*\mathcal{C}\subset  \mathcal{C}\}\ar[rr]^-{1:1} & &\ar[ll]\mathcal{A''}_{ice}},  $$
where ${_{i_{\ast}(\mathcal{A^{'}})}\mathcal{A}_{ice}}$ is the set of ICE-closed subcategories
 of $\mathcal{A}$ containing $i_{\ast}(\mathcal{A^{'}})$ and $\mathcal{A''}_{ice}$ is the set of ICE-closed
	subcategories of $\mathcal{A''}$.

{\rm (4)}  If ${\mathcal{C}}\in {_{i_{\ast}(\mathcal{A^{'}})}\mathcal{A}_{ice}}$ and
$j_!{j^\ast(\mathcal{C})}\subset\mathcal{C}$, then there is a new recollement of ICE-closed subcategories $(  \mathcal{A^{'}},\mathcal{C},j^\ast(\mathcal{C}),\overline{i^\ast},\overline{i_\ast},\overline{i^!},\overline{j_!},\overline{j^\ast},\overline{j_\ast})$.
	}		
	\medskip

To glue
epibricks and monobricks via recollements, we need the intermediate extension functor $j_{!*}$, see Definition~\ref{def-int-func}.

		 {\bf Theorem 1.2.}(Theorem~\ref{4.3}, ~\ref{4.4} and ~\ref{4.5})
{\it  Let $(  \mathcal{A^{'}},\mathcal{A},\mathcal{A^{''}},i^\ast,i_\ast,$
$i^!,j_!,j^\ast,j_\ast)$ be a recollement of abelian categories.

{\rm (1)} Both $i_*$ and $j_{!*}$
preserve epibricks and monobricks.

{\rm (2)} $j_*$ preserves monobricks and $j_!$ preserves epibricks.

{\rm (3)} If $\mathcal{C^{'}}$ is an epibrick in $\mathcal{A^{'}}$ and $\mathcal{C^{''}}$ is an epibrick in $\mathcal{A^{''}}$, then $i_\ast(\mathcal{C^{'}})\cup {j_{!*}(\mathcal{C^{''}})}$ is an epibrick in $\mathcal{A}$. The same is true for monobrick.

{\rm (4)} Assume $\mathcal{C^{'}}$ is an epibrick in $\mathcal{A^{'}}$ and $\mathcal{C^{''}}$ is an epibrick in $\mathcal{A^{''}}$.
If $i^*$ is exact, then $i_\ast(\mathcal{C^{'}})\cup {j_!(\mathcal{C^{''}})}$ is an epibrick in $\mathcal{A}$;
If $i^!$ is exact, then $i_\ast(\mathcal{C^{'}})\cup {j_*(\mathcal{C^{''}})}$ is an epibrick in $\mathcal{A}$. The same is true for monobrick.
}
	
	\medskip

The idea of the proof of Theorem 1.1 is similar to that
in \cite{Z17}, but additional ingredients are needed.
In view of Theorem 1.1 and Theorem 1.2, we generalize the
main result in \cite{LG24}, where the reduction for ICE-closed subcategories and epibricks
were considered over one-point extension algebras.

This paper is organized as follows. In section \ref{Section-definitions and conventions}, we give some definitions and notations
which will be used later.
In section \ref{Recollements of ICE-closed subcategories}, we investigate how ICE-closed subcategories transfer in
recollements of abelian categories. In section \ref{4}, we will observe
how to glue epibricks and monobricks, and in section \ref{5}, we apply our results
triangular matrix algebras, one-point extension algebras and Morita rings.

	\section{\large Definitions and conventions}\label{Section-definitions and conventions}
	
	\indent\indent Throughout this paper, all subcategories are assumed to be full subcategories,
and all functors between abelian categories are assumed to be additive.
First of all, we will recall some basic definitions about
ICE-closed subcategory, monobrick
and epibrick. These subcategories
were first defined in modules categories \cite{E22, E21}
and then generalized to exact categories \cite{HNW24}. In this
paper, we only focus on abelian categories.

	\begin{definition}{}
{\rm Let $\mathcal{C}$ be a subcategory of an abelian category $\mathcal{A}$.

		(1) $\mathcal{C}$ is {\it closed under images (resp. kernels, cokernels)} if for every map $\xymatrix{M\ar[r]^f & N}$ with
		$M,N\in{\mathcal{C}}$, we have $\Im f\in{\mathcal{C}}$ (resp. $\Ker f\in{\mathcal{C}}, \Coker f\in{\mathcal{C}}$).
		
		(2) $\mathcal{C}$ is {\it closed under extensions} if for every short exact sequence in $\mathcal{A}$
		 $$\xymatrix{0\ar[r] & N_{1}\ar[r] &N_{2}\ar[r] & N_{3}\ar[r] &0}$$
		with $N_{1},N_{3}\in{\mathcal{C}}$, we have $N_{2}\in{\mathcal{C}}$.
			
		(3) $\mathcal{C}$ is {\it closed under quotients} if for every exact sequence in $\mathcal{A}$ $$\xymatrix{N_{1}\ar[r] & N_{2}\ar[r] &0}$$
		with $N_{1}\in{\mathcal{C}}$, we have $N_{2}\in{\mathcal{C}}$.
				
			(4) (See \cite{E22, HNW24}) $\mathcal{C}$ is an {\it ICE-closed subcategory} if it is closed under images, cokernels and extensions.
			
			(5) (See \cite{Hov01}) $\mathcal{C}$ is a {\it wide subcategory} if it is closed under kernels, cokernels and extensions.
			
			(6) (See \cite{Dic66}) $\mathcal{C}$ is a {\it torsion class} if it is closed under quotients and extensions.
			}
	\end{definition}	

It is clear that all wide subcategories and torsion classes are ICE-closed subcategories, see \cite[Corollary 2.3]{LG24} for example.
Besides, the empty subcategory, the zero subcategory and the entire category of
an abelian category are all ICE-closed subcategories.
We denote by $\mathcal{A}_{ice}$ (resp. $\mathcal{A}_{tor}$) the set of all ICE-closed
subcategories (resp. torsion classes) of $\mathcal{A}$, and $_{\mathcal{C}}{\mathcal{A}}_{ice}$
(resp. $_{\mathcal{C}}{\mathcal{A}}_{tor}$) the set of all ICE-closed subcategories (resp. torsion classes)
of $\mathcal{A}$ containing a given subcategory $\mathcal{C}$.
	
\begin{definition}
{\rm Let $A$ be an object of an abelian category $\mathcal{A}$.
		
(1) $A$ is a {\it brick} if $\End _{\mathcal{A}}(A)$ is a division ring. We denote by $\brick \mathcal{A}$ the set of isoclasses of bricks in $\mathcal{A}$.
		
		(2) (See \cite{E21}) A subset $S\subseteq{\brick{\mathcal{A}}}$ is called an {\it epibrick} if every morphism between elements of $S$ is either zero or epimorphic in $\mathcal{A}$. We denote by $\ebrick\mathcal{A}$ the set of epibricks in $\mathcal{A}$.
		
		(3) (See \cite{E21}) A subset $S\subseteq{\brick{\mathcal{A}}}$ is called a {\it monobrick} if every morphism between elements of $S$ is either zero or monomorphic in $\mathcal{A}$. We denote by  $\mbrick\mathcal{A}$ the set of monobricks in $\mathcal{A}$.}
		
	\end{definition}

Next, let's recall the definition of recollements of abelian categories, see for instance \cite{BBD82, FP04, Psa18}.
	\begin{definition} {\rm Let $\mathcal{A}, \mathcal{A^{'}},  \mathcal{A^{''}}$ be abelian categories. A recollement of $\mathcal{A}$ relative
			to $\mathcal{A^{'}}$ and $\mathcal{A^{''}}$
is given by the diagram
		$$\xymatrix@!=4pc{ \mathcal{A}^{'} \ar[r]|{i_{\ast}} & \mathcal{A} \ar@<-3ex>[l]_{i^{\ast}}
			\ar@<+3ex>[l]^{i^!} \ar[r]|{j^{\ast}} & \mathcal{A}^{''}
			\ar@<-3ex>[l]_{j_!} \ar@<+3ex>[l]^{j_{\ast}}}  $$
 henceforth denoted by $( \mathcal{A^{'}},\mathcal{A},\mathcal{A^{''}},i^\ast,i_\ast,i^!,j_!,j^\ast,j_\ast)$, satisfying the following conditions:
		
		(1) $(i^{\ast},i_{\ast}), (i_{\ast},i^!), (j_{!},j^{\ast})$ and $(j^{\ast},j_{\ast})$ are adjoint
		pairs;

		(2) $i_{\ast}$, $j_!$ and $j_{\ast}$ are fully faithful functors;

		(3) $\Ker j^{\ast}=\Im i_{\ast}$ .
		
		 }
	\end{definition}
	
In the following remark we isolate some easily
 established properties of a recollement situation which will be useful later.
		\begin{remark}\label{2.4}
		{\rm (1) Both $i_{\ast}$ and $j^{\ast}$ are exact,
$i^*$ and $j_!$ are right exact, and $i^!$ and $j_*$ are left exact.
		
		(2) There are four natural isomorphisms $i^{\ast}i_{\ast}\cong{\id _{\mathcal{A}'}},
i^{!}i_{\ast}\cong{\id  _{\mathcal{A}'}},j^{\ast}j_{!}\cong{\id  _{\mathcal{A}''}}$ and $j^{\ast}j_{\ast}\cong{\id  _{\mathcal{A}''}}$
given by the units and the counits of the adjoint
 pairs, respectively.
		
		(3) (See \cite[Proposition 2.6]{Psa14} or \cite[Lemma 3.1]{FZ17}
For all $A\in{\mathcal{A}}$, there exist $M^{'}$ and $N^{'}$ in $\mathcal{A^{'}}$ such that the
		units and counits induce the following two exact sequences
		$$0\rightarrow i_{\ast}(M^{'}) \rightarrow j_{!}j^{\ast}(A) \rightarrow A \rightarrow i_{\ast}i^{\ast}(A) \rightarrow0,$$
		$$0\rightarrow i_{\ast}i^{!}(A) \rightarrow A \rightarrow j_{\ast}j^{\ast}(A)  \rightarrow i_{\ast}(N^{'}) \rightarrow0.$$
		}
	\end{remark}

Associated to a recollement there is a seventh functor, called the intermediate extension
functor, see for instance \cite{BBD82, FP04, Kuhn94}. This functor plays an important
role in gluing simple modules and semibricks \cite{BBD82,Z19}.		
\begin{definition}\label{def-int-func}{\rm
Let $(  \mathcal{A^{'}},\mathcal{A},\mathcal{A^{''}},i^\ast,i_\ast,i^!,j_!,j^\ast,j_\ast)$ be a recollement of abelian categories.
The {\it intermediate extension
functor} $j_{!*}:\mathcal{A}''\rightarrow \mathcal{A}$
is given by $j_{!*}(M)=\Im (\xymatrix{j_!M\ar[r]^{\gamma _M} & j_*M})$, where
$M\in \mathcal{A}''$ and $\gamma _M$ is the map corresponding to the identity of the
natural isomorphisms
 $$\Hom _{\mathcal{A}}(j_!M,j_*M)
\cong \Hom _{\mathcal{A}''}(M,j^*j_*M)\cong \Hom _{\mathcal{A}''}(M,M).$$}
\end{definition}	

We need the following properties from \cite[Lemma 2.2]{CS17}, \cite[Proposition 4.3]{FP04}
and \cite[Proposition 4.6 ]{Kuhn94}.
\begin{lemma}\label{lem-int-func}

{\rm (1)} The functor $j_{!*}$ is fully faithful;

{\rm (2)} The functor $j_{!*}$ preserves monomorphisms and epimorphisms;

{\rm (3)} $i^*j_{!*}=0$ and
$i^!j_{!*}=0$.

\end{lemma}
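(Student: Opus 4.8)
The plan is to derive all three parts directly from the defining image factorization $j_!M \xrightarrow{\pi_M} j_{!*}M \xrightarrow{\iota_M} j_*M$ of $\gamma_M$ (so $\pi_M$ is epic and $\iota_M$ is monic), together with the vanishings coming from $\Ker j^*=\Im i_*$ and the exactness properties collected in Remark~\ref{2.4}. The first observation I would record, and use repeatedly, is that $j^*j_{!*}\cong \id_{\mathcal{A}''}$: since $j^*$ is exact it commutes with images, so $j^*j_{!*}M=\Im(j^*\gamma_M)$, and $j^*\gamma_M$ is the identity of $M$ under the isomorphisms $j^*j_!\cong\id\cong j^*j_*$, whence $\Im(j^*\gamma_M)=M$ naturally in $M$.

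For (3), I would first establish the auxiliary vanishings $i^*j_!=0$ and $i^!j_*=0$, both consequences of $j^*i_*=0$. By the adjunctions $(i^*,i_*)$ and $(j_!,j^*)$ one gets $\Hom_{\mathcal{A}'}(i^*j_!M,X)\cong \Hom_{\mathcal{A}''}(M,j^*i_*X)=0$ for all $X$, so $i^*j_!M=0$ by Yoneda; dually $(i_*,i^!)$ and $(j^*,j_*)$ give $\Hom_{\mathcal{A}'}(X,i^!j_*M)\cong \Hom_{\mathcal{A}''}(j^*i_*X,M)=0$, so $i^!j_*M=0$. Now apply $i^*$ (right exact) to the epimorphism $\pi_M$: then $i^*j_{!*}M$ is a quotient of $i^*j_!M=0$, hence zero. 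Applying $i^!$ (left exact) to the monomorphism $\iota_M$ exhibits $i^!j_{!*}M$ as a subobject of $i^!j_*M=0$, hence zero.

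For (2), I would invoke naturality of $\gamma$ along a map $f\colon M\to N$, which yields the relations $\iota_N\circ j_{!*}(f)=j_*(f)\circ\iota_M$ and $j_{!*}(f)\circ\pi_M=\pi_N\circ j_!(f)$. If $f$ is monic then $j_*(f)$ is monic ($j_*$ is left exact), so $j_*(f)\circ\iota_M=\iota_N\circ j_{!*}(f)$ is monic, and since a composite being monic forces its right factor to be monic, $j_{!*}(f)$ is monic. If $f$ is epic then $j_!(f)$ is epic ($j_!$ is right exact), so $\pi_N\circ j_!(f)=j_{!*}(f)\circ\pi_M$ is epic, and since a composite being epic forces its left factor to be epic, $j_{!*}(f)$ is epic.

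Part (1) is where the real work lies. Faithfulness is immediate: functoriality of the isomorphism $j^*j_{!*}\cong\id$ shows that $j^*\colon \Hom_{\mathcal{A}}(j_{!*}M,j_{!*}N)\to \Hom_{\mathcal{A}''}(M,N)$ is a retraction of $(j_{!*})_*$, so $(j_{!*})_*$ is injective. For fullness it then suffices to prove that this same $j^*$ is \emph{injective} on $\Hom_{\mathcal{A}}(j_{!*}M,j_{!*}N)$: being then both injective and (by the retraction identity) surjective, it is bijective, and its inverse is exactly $(j_{!*})_*$, which is therefore bijective. Establishing this injectivity is the main obstacle, and I would argue as follows. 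Given $g\colon j_{!*}M\to j_{!*}N$ with $j^*(g)=0$, form $h=\iota_N\circ g\circ\pi_M\colon j_!M\to j_*N$; then $j^*(h)=0$. Under the adjunction bijection $\Hom_{\mathcal{A}}(j_!M,j_*N)\cong \Hom_{\mathcal{A}''}(M,j^*j_*N)$, sending $h$ to $j^*(h)\circ\eta_M$ with invertible unit $\eta_M\colon M\to j^*j_!M$, a map vanishes iff its image under $j^*$ does, so $h=0$. Since $\iota_N$ is monic and $\pi_M$ is epic, $\iota_N\circ g\circ\pi_M=0$ forces $g=0$, which completes the argument.
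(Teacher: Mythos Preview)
Your proof is correct in all three parts. Note, however, that the paper itself does not prove this lemma at all: it simply cites \cite[Lemma 2.2]{CS17}, \cite[Proposition 4.3]{FP04}, and \cite[Proposition 4.6]{Kuhn94} for these facts. Your self-contained argument---extracting everything from the image factorization $j_!M \twoheadrightarrow j_{!*}M \hookrightarrow j_*M$, the identity $j^*j_{!*}\cong\id_{\mathcal{A}''}$ coming from exactness of $j^*$, and the adjunction vanishings $i^*j_!=0$, $i^!j_*=0$---is essentially the route taken in those references, so your proof supplies what the paper defers to the literature. Your treatment of fullness in (1), reducing to injectivity of $j^*$ on $\Hom_{\mathcal{A}}(j_{!*}M,j_{!*}N)$ and then using that $\pi_M$ is epic, $\iota_N$ is monic, and the adjunction $\Hom_{\mathcal{A}}(j_!M,j_*N)\cong\Hom_{\mathcal{A}''}(M,j^*j_*N)$ detects vanishing via $j^*$, is the standard argument and is executed cleanly.
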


	\section{ICE-closed subcategories and recollements }\label{Recollements of ICE-closed subcategories}
	
	\indent\indent In this section, we will investigate how ICE-closed subcategories transfer in
recollements of abelian categories. The following lemma is well-known. Here, we give a proof for readers' convenience.
	
	\begin{lemma}\label{3.3}
			Let $\xymatrix{\mathcal{A}\ar[r]^F & \mathcal{B}}$ be a functor between abelian categories, and
$f: M\rightarrow N$ be a map in $\mathcal{A}$.
			
{\rm (1)} If $F$ is left exact, then $F(\ker f)\cong{\ker F(f)}$;
	
{\rm (2)} If $F$ is right exact, then $F(\Coker f)\cong{\Coker F(f)}$;
	
{\rm (3)} If $F$ is exact, then $F(\Im f)\cong{\Im F(f)}$.
	\end{lemma}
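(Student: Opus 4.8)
The plan is to derive each isomorphism by feeding a canonical (short) exact sequence attached to $f$ into the relevant exactness hypothesis, relying on the standard facts that a left exact functor preserves kernels and monomorphisms, whereas a right exact functor preserves cokernels and epimorphisms.

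For (1) I would begin with the left exact sequence $0 \to \ker f \to M \xrightarrow{f} N$. Applying the left exact functor $F$ produces the exact sequence $0 \to F(\ker f) \to F(M) \xrightarrow{F(f)} F(N)$; exactness at $F(M)$, together with the fact that $F(\ker f) \to F(M)$ remains monic, identifies $F(\ker f)$ with $\ker F(f)$. Part (2) is exactly dual: the right exact sequence $M \xrightarrow{f} N \to \Coker f \to 0$ is carried by the right exact functor $F$ to the exact sequence $F(M) \xrightarrow{F(f)} F(N) \to F(\Coker f) \to 0$, so that $F(\Coker f) \cong \Coker F(f)$.

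For (3) I would avoid a fresh diagram chase and instead reduce to the first two parts, since an exact functor is both left and right exact. Using the description $\Im f = \ker(N \to \Coker f)$, part (1) gives $F(\Im f) \cong \ker\bigl(F(N) \to F(\Coker f)\bigr)$, and substituting $F(\Coker f) \cong \Coker F(f)$ from part (2) rewrites this as $\ker\bigl(F(N) \to \Coker F(f)\bigr) = \Im F(f)$. Equivalently, one checks that $F$ transports the epi-mono factorization $M \twoheadrightarrow \Im f \hookrightarrow N$ of $f$ to the image factorization of $F(f)$.

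There is essentially no serious obstacle here — the statement is a routine consequence of the definitions. The one point requiring a little care is the identification step in (1): one must remember that left exactness guarantees not merely an injection $F(\ker f) \hookrightarrow F(M)$ but that this injection is precisely the kernel of $F(f)$, and symmetrically for cokernels in (2). Routing (3) through (1) and (2) is what lets me bypass the bookkeeping of verifying directly that $F$ preserves the epi-mono factorization.
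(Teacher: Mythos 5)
Your proposal is correct and follows essentially the same route as the paper: part (1) by applying $F$ to the left exact sequence $0\to \Ker f\to M\to N$ and invoking the universal property of the kernel, part (2) by duality, and part (3) by reducing to (1) and (2) through the identification $\Im f=\Ker(\Coker f)$. The only difference is cosmetic — you spell out the substitution $F(\Coker f)\cong\Coker F(f)$ inside the kernel in (3), where the paper simply cites (1) and (2).
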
	
	\begin{proof}
(1) Applying $F$ to the exact sequence $\xymatrix{0\ar[r] & Kerf\ar[r] & M\ar[r]^f & N}$,
we get an exact sequence $\xymatrix{0\ar[r] & F(\Ker f)\ar[r] & F(M)\ar[r]^{F(f)} & F(N)}$,
and then $F(\Ker f)\cong{\Ker F(f)}$ by the uniqueness of kernel.

(2) This can be proved with an argument entirely dual to (1).

(3) Since $\Im f=\Ker (\Coker f)$ and $\Im F(f)=\Ker (\Coker F(f))$, the statement follows from (1) and (2).
	\end{proof}
	
	\begin{lemma}\label{ho1}
Let $\xymatrix{\mathcal{A}\ar[r]^F & \mathcal{B}}$ be a fully faithful exact functor between abelian categories.
Assume $\mathcal{C}$ is a subcategory of $\mathcal{A}$
which is closed under images and cokernels. Then $F(\mathcal{C})$ is closed under images and cokernels.
	\end{lemma}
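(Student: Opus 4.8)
The plan is to transfer the closure properties through $F$ by using the structural identification from Lemma \ref{3.3}(3). The key point is that $F$ is a fully faithful exact functor, so it commutes with images (by Lemma \ref{3.3}(3)) and cokernels (by Lemma \ref{3.3}(2), since exact implies right exact), and moreover every map between objects of $F(\mathcal{C})$ comes from a map in $\mathcal{C}$ because $F$ is full.

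First I would set up the image case. Take a map $g\colon F(M)\rightarrow F(N)$ in $\mathcal{B}$ with $M, N\in\mathcal{C}$, so that $F(M), F(N)\in F(\mathcal{C})$. Since $F$ is full, there exists $f\colon M\rightarrow N$ in $\mathcal{A}$ with $F(f)=g$. By hypothesis $\mathcal{C}$ is closed under images, so $\Im f\in\mathcal{C}$, and hence $F(\Im f)\in F(\mathcal{C})$. By Lemma \ref{3.3}(3), $F(\Im f)\cong\Im F(f)=\Im g$. Since $F(\mathcal{C})$ is (as a full subcategory) closed under isomorphism, we conclude $\Im g\in F(\mathcal{C})$. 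This shows $F(\mathcal{C})$ is closed under images.

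The cokernel case runs in exactly the same way: given $g=F(f)$ as above, the hypothesis gives $\Coker f\in\mathcal{C}$, so $F(\Coker f)\in F(\mathcal{C})$, and Lemma \ref{3.3}(2) yields $F(\Coker f)\cong\Coker F(f)=\Coker g$, whence $\Coker g\in F(\mathcal{C})$. The only genuine ingredient beyond the bookkeeping is fullness, which is what lets me pull an arbitrary morphism in $F(\mathcal{C})$ back to a morphism in $\mathcal{C}$; without it the hypotheses on $\mathcal{C}$ could not be applied. I do not anticipate a serious obstacle here: faithfulness is not even needed for the argument, and all the analytic content is already packaged in Lemma \ref{3.3}. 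The one point to state carefully is that $F(\mathcal{C})$ must be understood as closed under isomorphisms in $\mathcal{B}$ (as is standard for these subcategory classes), so that the isomorphisms produced by Lemma \ref{3.3} suffice to conclude membership.
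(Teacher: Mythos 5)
Your proposal is correct and follows the same route as the paper's own proof: lift a morphism of $F(\mathcal{C})$ to a morphism $f'$ in $\mathcal{C}$ via fullness, apply the closure hypotheses to get $\Im f',\Coker f'\in\mathcal{C}$, and transfer back using Lemma~\ref{3.3}. Your two side remarks --- that faithfulness is not actually used and that $F(\mathcal{C})$ should be read as closed under isomorphism --- are accurate refinements of the same argument, not a different approach.
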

	\begin{proof}
Let $\xymatrix{M\ar[r]^f & N}$ be a map of $F(\mathcal{C})$.
Since $M,N\in F(\mathcal{C})$
and $F$ is fully faithful, we have that $M=F(M^{'}),N=F(N^{'})$ and $f=F(f^{'})$, where
$M^{'},N{'}\in{\mathcal{C}}$ and $f'\in{\Hom _{\mathcal{A}}(M^{'},N^{'})}$.
Since $\mathcal{C}$ is closed under images and cokernels, we infer that $\Im {f}^{'}\in{\mathcal{C}}$ and $\Coker {f}^{'}\in{\mathcal{C}}$.
Now it follows from Lemma~\ref{3.3} that $\Im f=\Im F(f^{'})\cong{F(\Im f^{'})}\in{F(\mathcal{C})}$,
and $\Coker f=\Coker F(f^{'})=F(\Coker f^{'})\in{F(\mathcal{C})}$.
Therefore, $F(\mathcal{C})$ is closed under images and cokernels.
	\end{proof}

From now on, let $(  \mathcal{A^{'}},\mathcal{A},\mathcal{A^{''}},i^\ast,i_\ast,i^!,j_!,j^\ast,j_\ast)$ be a recollement of abelian categories.
We are in a position to prove Theorem 1.1.
\begin{theorem}\label{3.4}
If $\mathcal{C}\in{\mathcal{A^{'}}_{ice}}$, then $i_{\ast}(\mathcal{C})\in{\mathcal{A}_{ice}}$.
	\end{theorem}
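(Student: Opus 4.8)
The plan is to verify the three defining closure properties of an ICE-closed subcategory for $i_\ast(\mathcal{C})$ one at a time, handling images and cokernels together and treating extensions separately. The first part requires no new work: since $i_\ast$ is exact (Remark~\ref{2.4}(1)) and fully faithful, and $\mathcal{C}$ is closed under images and cokernels, Lemma~\ref{ho1} applies verbatim with $F=i_\ast$ and shows that $i_\ast(\mathcal{C})$ is closed under images and cokernels.

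The substantive part is closure under extensions. Suppose $0\to N_1\to N_2\to N_3\to 0$ is a short exact sequence in $\mathcal{A}$ with $N_1,N_3\in i_\ast(\mathcal{C})$, say $N_1=i_\ast(M_1')$ and $N_3=i_\ast(M_3')$ with $M_1',M_3'\in\mathcal{C}$. First I would locate $N_2$ inside the image of $i_\ast$. Because $N_1,N_3\in\Im i_\ast=\Ker j^\ast$, applying the exact functor $j^\ast$ to the sequence gives $j^\ast(N_2)=0$, so $N_2\in\Ker j^\ast=\Im i_\ast$ and hence $N_2\cong i_\ast(M_2')$ for some $M_2'\in\mathcal{A}'$.

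It then remains to show $M_2'\in\mathcal{C}$, and the key step is to reflect the short exact sequence back along $i_\ast$. Using fullness and faithfulness, the two maps in $0\to i_\ast(M_1')\to i_\ast(M_2')\to i_\ast(M_3')\to 0$ are $i_\ast(g)$ and $i_\ast(h)$ for unique $g\colon M_1'\to M_2'$ and $h\colon M_2'\to M_3'$ in $\mathcal{A}'$. I would then check that $0\to M_1'\xrightarrow{g}M_2'\xrightarrow{h}M_3'\to 0$ is exact in $\mathcal{A}'$. By Lemma~\ref{3.3} applied to the exact functor $i_\ast$, one has $i_\ast(\Ker g)\cong\Ker i_\ast(g)=0$ and $i_\ast(\Coker h)\cong\Coker i_\ast(h)=0$, and faithfulness of $i_\ast$ reflects these vanishings, so $g$ is monic and $h$ is epic. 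For exactness in the middle, faithfulness gives $h\circ g=0$ (since $i_\ast(h\circ g)=0$), so $\Im g\subseteq\Ker h$ as subobjects of $M_2'$; moreover $i_\ast(\Im g)\cong\Im i_\ast(g)=\Ker i_\ast(h)\cong i_\ast(\Ker h)$, and full faithfulness reflects this isomorphism into an equality $\Im g=\Ker h$. Thus the reconstructed sequence is short exact in $\mathcal{A}'$ with outer terms in $\mathcal{C}$, so extension-closedness of $\mathcal{C}$ yields $M_2'\in\mathcal{C}$, whence $N_2\cong i_\ast(M_2')\in i_\ast(\mathcal{C})$.

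The main obstacle I anticipate is precisely this reflection step: ensuring that a fully faithful exact functor \emph{reflects} short exact sequences rather than merely preserving them. The argument is mechanical once Lemma~\ref{3.3} is combined with faithfulness (to detect the vanishing of $\Ker g$, $\Coker h$, and $h\circ g$) and fullness (to upgrade the abstract isomorphism $i_\ast(\Im g)\cong i_\ast(\Ker h)$ to an honest equality of subobjects of $M_2'$), but it is the only place where the full strength of the recollement hypotheses on $i_\ast$ together with the identity $\Ker j^\ast=\Im i_\ast$ is genuinely needed.
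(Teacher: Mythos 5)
Your proposal is correct, and its overall skeleton coincides with the paper's: Lemma~\ref{ho1} disposes of images and cokernels, and the identity $\Ker j^{\ast}=\Im i_{\ast}$ applied via the exact functor $j^{\ast}$ locates the middle term of the extension in the essential image of $i_{\ast}$, after which full faithfulness reconstructs the sequence in $\mathcal{A}'$. Where you genuinely diverge is the final step, proving that the reconstructed sequence $0\to M_1'\to M_2'\to M_3'\to 0$ is exact. The paper uses the recollement adjoints: it applies the left exact functor $i^{!}$ together with $i^{!}i_{\ast}\cong\mathrm{Id}_{\mathcal{A}'}$ to get exactness of $0\to M_1'\to M_2'\to M_3'$, and then shows the last map is epic by computing $\Coker g'\cong i^{\ast}\bigl(\Coker(i_{\ast}g')\bigr)=0$ using the right exact $i^{\ast}$, $i^{\ast}i_{\ast}\cong\mathrm{Id}_{\mathcal{A}'}$ and Lemma~\ref{3.3}. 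You instead never touch $i^{\ast}$ or $i^{!}$: you invoke the general fact that a faithful exact functor reflects exactness, checking via Lemma~\ref{3.3} and faithfulness that $\Ker g$, $\Coker h$ and $h\circ g$ vanish, and that the canonical mono $\Im g\hookrightarrow\Ker h$ becomes an isomorphism after $i_{\ast}$ and hence is one (faithfulness plus exactness already suffice here; you do not really need fullness for this last reflection, only to produce $g$ and $h$ in the first place). Your route is slightly more elementary and more portable --- it shows the extension-closure argument needs only a fully faithful exact functor whose essential image is closed under extensions, which is exactly what $\Ker j^{\ast}=\Im i_{\ast}$ provides --- whereas the paper's route exploits the adjunctions that the recollement hands you for free and is a few lines shorter. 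Both arguments are complete and correct.
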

	\begin{proof}
Since $i_{\ast}$ is fully faithful and exact,
it follows from Lemma~\ref{ho1} that $i_{\ast}(\mathcal{C})$ is closed under images and cokernels.
Now we claim that
$i_{\ast}(\mathcal{C})$ is closed under extensions. Let
		$$\xymatrix@C=2pc{0\ar[r] & M\ar[r]^f & N\ar[r]^g & P\ar[r]&0}\eqno {\rm (3.1)}$$ be an
		 exact sequence in $\mathcal{A}$, where $M=i_{\ast}(M^{'}),P=i_{\ast}(P^{'}),M^{'}\in{\mathcal{C}}$ and $P^{'}\in{\mathcal{C}}$.
Applying the exact functor $j^{\ast}$, we have the following exact sequence:
		 $$\xymatrix@C=2.3pc{0\ar[r] & j^{\ast}i_{\ast}(M^{'})\ar[r]^{j^{\ast}(f)} &j^{\ast} (N)\ar[r]^{j^{\ast}(g)} & j^{\ast}i_{\ast}(P')\ar[r]&0}.$$ Since $\Ker j^{\ast}=\Im i_{\ast}$, we get that $j^{\ast}i_{\ast}(M^{'})=0=j^{\ast}i_{\ast}(P^{'})$ and then $j^{\ast}(N)=0$.
Thus $N\in \Ker j^{\ast}$, and then $N=i_{\ast}(N^{'})$
for some $N^{'}\in{\mathcal{A^{'}}}$.
Since $i_{\ast}$ is fully faithful, we have that $f=i_{\ast}(f^{'})$ and $g=i_{\ast}(g^{'})$,
where $f'\in{\Hom _{\mathcal{A}'}(M^{'},N^{'})}$ and $g'\in{\Hom _{\mathcal{A}'}(N^{'},P^{'})}$. Therefore, the exact sequence (3.1)
is of the form $$\xymatrix@C=2pc{0\ar[r] & i_*M'\ar[r]^{i_{\ast}(f^{'})} & i_*N'\ar[r]^{i_{\ast}(g^{'})} & i_*P'\ar[r]&0}. \eqno {\rm (3.2)}$$
Applying the left exact functor $i^!$ and using $i^!i_*\cong \id  _{\mathcal{A}'}$, we obtain the following exact sequence
$$\xymatrix@C=2pc{0\ar[r] & M'\ar[r]^{f'} & N'\ar[r]^{g'} & P'}.$$
Since $i^*i_*\cong\id _{\mathcal{A}'}$, we have that $\Coker g'\cong\Coker (i^*i_*(g'))$, which is isomorphic to $i^* \Coker (i_*(g'))$
by Lemma~\ref{3.3}. Moreover, the exact sequence (3.2) implies that $i_*(g')$ is an epimorphism,
and then $\Coker (i_*(g'))=0$. Therefore, we get that $\Coker g'\cong i^* (\Coker (i_*g')) =0$, which shows that
$g'$ is an epimorphism, and then the sequence $$\xymatrix@C=2pc{0\ar[r] & M'\ar[r]^{f'} & N'\ar[r]^{g'} & P'\ar[r]&0}$$
is exact.
 Since $\mathcal{C}$ is closed under extensions and $M^{'}, P^{'}\in{\mathcal{C}}$,
 we obtain that $N^{'}\in {\mathcal{C}}$. This shows that $N=i_{\ast}(N^{'})\in{i_{\ast}(\mathcal{C})}$ and thus $i_{\ast}(\mathcal{C})$ is closed under extensions.
	\end{proof}
	
Now we will consider the behavior of the functors $j_{\ast}$ and $j_{!}$.
	\begin{theorem}\label{3.5}
 If $j_{\ast}$ (resp. $j_!$) is exact, then $\mathcal{C}\in{\mathcal{A^{''}}_{ice}}$ implies that $j_{\ast}(\mathcal{C})\in{\mathcal{A}_{ice}}$ (resp. $j_!(\mathcal{C})\in{\mathcal{A}_{ice}}$).
	\end{theorem}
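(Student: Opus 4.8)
The plan is to show that when $j_\ast$ is exact (the case of $j_!$ being entirely dual), the functor $j_\ast$ preserves the three closure properties defining an ICE-closed subcategory. Since $j_\ast$ is fully faithful (part of the recollement data) and now assumed exact, Lemma~\ref{ho1} applies directly and gives that $j_\ast(\mathcal{C})$ is closed under images and cokernels. So the entire content of the proof reduces to verifying closure under extensions, exactly as in the proof of Theorem~\ref{3.4}.

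First I would take a short exact sequence
$$\xymatrix@C=2pc{0\ar[r] & M\ar[r]^f & N\ar[r]^g & P\ar[r]&0}$$
in $\mathcal{A}$ with $M=j_\ast(M'')$ and $P=j_\ast(P'')$ for some $M'',P''\in\mathcal{C}$. Applying the exact functor $j^\ast$ and using the natural isomorphism $j^\ast j_\ast\cong\id_{\mathcal{A}''}$ from Remark~\ref{2.4}(2), I obtain a short exact sequence
$$\xymatrix@C=2pc{0\ar[r] & M''\ar[r]^{j^\ast(f)} & j^\ast(N)\ar[r]^{j^\ast(g)} & P''\ar[r]&0}$$
in $\mathcal{A}''$. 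Since $\mathcal{C}$ is closed under extensions, this already forces $j^\ast(N)\in\mathcal{C}$. The remaining task is therefore to identify $N$ itself as lying in $j_\ast(\mathcal{C})$, i.e. to show $N\cong j_\ast j^\ast(N)$ and that this object is what the extension closure of $\mathcal{C}$ produces.

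The hard part will be the final identification $N\cong j_\ast(j^\ast N)$. Here the exactness of $j_\ast$ is essential and must be used carefully. I would apply $j_\ast$ to the short exact sequence in $\mathcal{A}''$ above; because $j_\ast$ is exact, this yields a short exact sequence
$$\xymatrix@C=2pc{0\ar[r] & j_\ast(M'')\ar[r] & j_\ast j^\ast(N)\ar[r] & j_\ast(P'')\ar[r]&0}$$
in $\mathcal{A}$, whose outer terms are canonically $M$ and $P$. The unit $N\to j_\ast j^\ast(N)$ of the adjunction $(j^\ast,j_\ast)$ fits into a morphism of short exact sequences whose restrictions to $M$ and $P$ are the corresponding units; using that these units are isomorphisms on objects in the image of $j_\ast$ (again by $j^\ast j_\ast\cong\id$), the Five Lemma gives that the middle map $N\to j_\ast j^\ast(N)$ is an isomorphism. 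Hence $N\cong j_\ast(j^\ast N)$ with $j^\ast N\in\mathcal{C}$, so $N\in j_\ast(\mathcal{C})$, completing closure under extensions.

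The only subtlety I anticipate is making the comparison of the two short exact sequences precise enough to invoke the Five Lemma, namely checking that the unit maps genuinely assemble into a commutative ladder with the outer verticals being isomorphisms; this is routine from naturality of the adjunction unit but should be spelled out. The dual statement for $j_!$ follows by replacing $j_\ast$ with $j_!$, the unit with the counit of $(j_!,j^\ast)$, and using $j^\ast j_!\cong\id_{\mathcal{A}''}$, so I would dispatch it with a single remark that the argument is entirely dual.
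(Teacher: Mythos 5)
Your proposal is correct and follows essentially the same route as the paper's own proof: Lemma~\ref{ho1} handles images and cokernels, then for extensions one applies the exact functor $j^\ast$ to place $j^\ast(N)$ in $\mathcal{C}$, and finally the naturality ladder built from the unit $\eta:\id_{\mathcal{A}}\to j_\ast j^\ast$ (with $\eta_M$, $\eta_P$ isomorphisms because $M,P$ lie in the image of $j_\ast$ and $j^\ast j_\ast\cong\id_{\mathcal{A}''}$) together with the Five Lemma yields $N\cong j_\ast j^\ast(N)\in j_\ast(\mathcal{C})$. The paper likewise dispatches the $j_!$ case as dual, so there is nothing to add.
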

	\begin{proof}
Assume that $j_{\ast}$ is exact and $\mathcal{C}\in{\mathcal{A^{''}}_{ice}}$. We only show $j_{\ast}(\mathcal{C})\in{\mathcal{A}_{ice}}$,
and the statement on $j_!$ can be proved dually.
Since $j_{\ast}$ is fully faithful and exact,
we get that $j_{\ast}(\mathcal{C})$ is closed under images and cokernels by lemma~\ref{ho1}.
Now we claim that $j_{\ast}(\mathcal{C})$ is closed
under extensions. 		
		Let $$\xymatrix{0\ar[r] & M\ar[r]^f & N\ar[r]^g & P\ar[r]&0}$$ be an exact sequence in $\mathcal{A}$, where $M=j_{\ast}(M^{'}),P=j_{\ast}(P^{'}),M^{'}\in{\mathcal{C}}$ and $P^{'}\in{\mathcal{C}}$. Since $j^{\ast}$ is exact, we have the following exact sequence
		$$\xymatrix{0\ar[r] & j^{\ast}(M)\ar[r]^{j^{\ast}(f)} & j^{\ast}(N)\ar[r]^{j^{\ast}(g)} & j^{\ast}(P)\ar[r]&0},$$
where $j^{\ast}(M)=j^{\ast}j_{\ast}(M^{'})=M^{'}\in{\mathcal{C}}$ and $j^{\ast}(P)=j^{\ast}j_{\ast}(P^{'})=P^{'}\in{\mathcal{C}}$. Since $\mathcal{C}$ is ICE-closed, we infer that $j^{\ast}(N)\in{\mathcal{C}}$.
Note that the functor $j_*$ is exact, and then
we have the following exact commutative diagram:
			$$\xymatrix@C=3pc{ 0 \ar[r] & M   \ar[r]^{f}\ar[d]^{\eta_{M}} & N
			\ar[r]^{g}\ar[d]^{\eta_{N}} &  P \ar[r]\ar[d]^{\eta_{P}} & 0 \\
			0\ar[r]& j_{\ast}j^{\ast}(M) \ar[r]^{j_{\ast}j^{\ast}(f)}& j_{\ast}j^{\ast}(N)\ar[r]^{j_{\ast}j^{\ast}(g)} & j_{\ast}j^{\ast}(P) \ar[r]  &0,}$$
 where $\eta:\id_{\mathcal{A}}\rightarrow{j_{\ast}j^{\ast}}$ is the unit of the adjoint pair ($j^{\ast},j_{\ast}$).
Since $j^*j_*\cong \id_{\mathcal{A}''}$, we get that $M=j_{\ast}(M^{'})\cong j_*j^*j_*M'=j_*j^*M$, which shows that $\eta_{M}$ is an isomorphism.
Similarly, we obtain that $\eta_{P}$ is an isomorphism, and by Five-Lemma,
we have
$N\cong{j_{\ast}j^{\ast}(N)\in{j_{\ast}(\mathcal{C})}}$.
	\end{proof}

Next, we will investigate how ICE-closed subcategories transfer under the functors $j^{\ast}$. We need the following lemma.

	\begin{lemma}\label{3.6}
		If $\mathcal{C}\in{_{i_{\ast}(\mathcal{A^{'}})}\mathcal{A}_{ice}}$, then $j_{\ast}j^{\ast}(\mathcal{C})\subset{\mathcal{C}}$.
	\end{lemma}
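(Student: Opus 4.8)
The plan is to exploit the four-term exact sequence supplied by Remark~\ref{2.4}(3) and to split it into two short exact sequences, controlling each piece with the hypothesis $i_\ast(\mathcal{A^{'}})\subset \mathcal{C}$ together with the closure properties of $\mathcal{C}$.

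First I would fix an object $C\in\mathcal{C}$ and invoke Remark~\ref{2.4}(3), which yields an object $N^{'}\in\mathcal{A^{'}}$ and an exact sequence
$$0\rightarrow i_\ast i^!(C)\rightarrow C\xrightarrow{\eta_C} j_\ast j^\ast(C)\rightarrow i_\ast(N^{'})\rightarrow 0,$$
where $\eta_C$ is the unit of the adjunction $(j^\ast,j_\ast)$. Setting $K=\Im\eta_C$, this four-term sequence decomposes into the two short exact sequences
$$0\rightarrow i_\ast i^!(C)\rightarrow C\rightarrow K\rightarrow 0,\qquad 0\rightarrow K\rightarrow j_\ast j^\ast(C)\rightarrow i_\ast(N^{'})\rightarrow 0,$$
since $K=\Coker(i_\ast i^!(C)\rightarrow C)=\Ker(j_\ast j^\ast(C)\rightarrow i_\ast(N^{'}))$.

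Next I would observe that both terms landing in $i_\ast(\mathcal{A^{'}})$ are automatically in $\mathcal{C}$: indeed $i^!(C)\in\mathcal{A^{'}}$, so $i_\ast i^!(C)\in i_\ast(\mathcal{A^{'}})\subset\mathcal{C}$, and likewise $i_\ast(N^{'})\in i_\ast(\mathcal{A^{'}})\subset\mathcal{C}$. From the first short exact sequence, $K$ is the cokernel of a map between the objects $i_\ast i^!(C),C\in\mathcal{C}$; since $\mathcal{C}$ is closed under cokernels, $K\in\mathcal{C}$. Then in the second short exact sequence both $K$ and $i_\ast(N^{'})$ lie in $\mathcal{C}$, so closure under extensions forces $j_\ast j^\ast(C)\in\mathcal{C}$, which is exactly the assertion.

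The only point requiring care --- and the place where the containment $i_\ast(\mathcal{A^{'}})\subset\mathcal{C}$ is essential --- is that the kernel and cokernel of the unit $\eta_C$ are a priori known only to lie in $i_\ast(\mathcal{A^{'}})$, not in $\mathcal{C}$; absorbing these two ``error terms'' into $\mathcal{C}$ is precisely what membership in ${_{i_\ast(\mathcal{A^{'}})}\mathcal{A}_{ice}}$ buys us. Everything else is formal, using only the closure of $\mathcal{C}$ under cokernels and extensions; I do not expect to need the images-closure condition for this particular statement.
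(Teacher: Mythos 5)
Your proposal is correct and takes essentially the same approach as the paper: the paper's own proof invokes the same four-term exact sequence from Remark~\ref{2.4}(3) and concludes $j_{\ast}j^{\ast}(C)\in\mathcal{C}$ directly from $i_{\ast}(\mathcal{A^{'}})\subset\mathcal{C}$ and ICE-closedness, and your splitting at $K=\Im\eta_C$ (cokernel closure for $K$, then extension closure) is precisely the detail that terse argument leaves implicit. Your observation that closure under images is not needed here is also accurate, since $K$ is obtained as $\Coker(i_{\ast}i^{!}(C)\rightarrow C)$ rather than as an image of a map with both endpoints in $\mathcal{C}$.
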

	\begin{proof}
The proof is similar to that of \cite[Lemma 3.1]{Z17}. For convenience we include a short justification.
For any object $C\in\mathcal{C}\subset\mathcal{A}$,
it follows from Remark~\ref{2.4} (3) that there is an exact sequence	$$0\rightarrow i_{\ast}i^{!}(C) \rightarrow C \rightarrow j_{\ast}j^{\ast}(C)  \rightarrow i_{\ast}(M^{'}) \rightarrow0$$ for some $M^{'}\in\mathcal{A^{'}} $.
 Since $i_{\ast}(\mathcal{A^{'}})\subset\mathcal{C}$ and $\mathcal{C}$ is ICE-closed, we infer that $j_{\ast}j^{\ast}(C)\in\mathcal{C}$.
			
	\end{proof}
	\begin{proposition}\label{3.7}
		 If $\mathcal{C}\in{_{i_{\ast}(\mathcal{A^{'}})}\mathcal{A}_{ice}}$, then the following hold.
		
		{\rm (1)} If $j_!j^{\ast}(\mathcal{C})\subset{\mathcal{C}}$, then $j^{\ast}(\mathcal{C})\in{\mathcal{A^{''}}_{ice}}$.
		
		{\rm (2)} If $j_{\ast}$ is exact, then $j^{\ast}(\mathcal{C})\in{\mathcal{A^{''}}_{ice}}$.
	\end{proposition}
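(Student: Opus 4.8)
The plan is to show in both cases that $j^\ast(\mathcal{C})$ is closed under images, cokernels, and extensions inside $\mathcal{A}''$. I would begin with the closure under images and cokernels, which should be common to both parts. Since $j^\ast$ is exact (Remark~\ref{2.4}(1)) and maps $\mathcal{C}$ into $\mathcal{A}''$, given any morphism $\bar f : j^\ast(C_1) \to j^\ast(C_2)$ with $C_1, C_2 \in \mathcal{C}$, I want to realize $\bar f$ as $j^\ast$ of a morphism in $\mathcal{C}$. The natural way is to use the adjunction $(j^\ast, j_\ast)$: a map $j^\ast(C_1)\to j^\ast(C_2)$ corresponds to a map $C_1 \to j_\ast j^\ast(C_2)$, and by Lemma~\ref{3.6} we know $j_\ast j^\ast(C_2)\in\mathcal{C}$. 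So I can take a morphism $g : C_1 \to j_\ast j^\ast(C_2)$ in $\mathcal{C}$ whose image under $j^\ast$, after using $j^\ast j_\ast \cong \id_{\mathcal{A}''}$, recovers $\bar f$. Applying the exact functor $j^\ast$ and Lemma~\ref{3.3}(2),(3), we get $\Im\bar f \cong j^\ast(\Im g)$ and $\Coker\bar f \cong j^\ast(\Coker g)$; since $\mathcal{C}$ is ICE-closed these images and cokernels lie in $\mathcal{C}$, so $j^\ast(\mathcal{C})$ is closed under images and cokernels.

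The remaining and more delicate point is closure under extensions, and this is where the two hypotheses of (1) and (2) enter differently. Suppose
\[
0 \to j^\ast(C_1) \to E \to j^\ast(C_2) \to 0
\]
is a short exact sequence in $\mathcal{A}''$ with $C_1, C_2 \in \mathcal{C}$. I would like to lift this to a short exact sequence in $\mathcal{A}$ with outer terms in $\mathcal{C}$, and then use that $\mathcal{C}$ is closed under extensions together with $j^\ast E \cong E$ (via $j^\ast j_! \cong \id$ or $j^\ast j_\ast \cong \id$). For part (2), where $j_\ast$ is exact, the strategy mirrors Theorem~\ref{3.5}: apply $j_\ast$ to the sequence to obtain an exact sequence $0 \to j_\ast j^\ast(C_1) \to j_\ast E \to j_\ast j^\ast(C_2) \to 0$ in $\mathcal{A}$. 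By Lemma~\ref{3.6} the outer terms $j_\ast j^\ast(C_i)$ lie in $\mathcal{C}$, so $j_\ast E \in \mathcal{C}$, and hence $E \cong j^\ast j_\ast E \in j^\ast(\mathcal{C})$.

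For part (1), where instead $j_! j^\ast(\mathcal{C})\subset\mathcal{C}$, I would run the dual argument using $j_!$. The obstacle is that $j_!$ is only right exact (Remark~\ref{2.4}(1)), so applying it to the short exact sequence does not immediately give a short exact sequence; it yields exactness on the right but only $\to j_! j^\ast(C_1) \to j_! E \to j_! j^\ast(C_2) \to 0$ with a possibly nonzero kernel. I expect the main difficulty to lie precisely here, in controlling the failure of left-exactness of $j_!$. The way around it is the first exact sequence of Remark~\ref{2.4}(3): for $E' := j_! E$ one has $0 \to i_\ast(M') \to j_! j^\ast(j_! E) \to j_! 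E \to i_\ast i^\ast(j_! E) \to 0$, together with $j^\ast j_! \cong \id_{\mathcal{A}''}$, which lets me compare $j_! E$ against the sequence and show $j_! E \in \mathcal{C}$ using that $i_\ast(\mathcal{A}') \subset \mathcal{C}$ and $\mathcal{C}$ is ICE-closed (so closed under the relevant subquotients and extensions by objects of $i_\ast(\mathcal{A}')$). Once $j_! E \in \mathcal{C}$, then $E \cong j^\ast j_! E \in j^\ast(\mathcal{C})$, completing the proof.
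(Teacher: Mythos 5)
Your treatment of images, cokernels, and part (2) is correct and essentially coincides with the paper's: the paper realizes a morphism of $j^{\ast}(\mathcal{C})$ as $j^{\ast}$ applied to $j_{\ast}(f)$, a morphism between objects of $\mathcal{C}$ by Lemma~\ref{3.6}, which is the same computation as your adjunction argument (the paper cites \cite[Proposition 3.2]{Z17} for cokernels, where you argue directly); and for (2) the paper applies the exact functor $j_{\ast}$ to the extension exactly as you do. The genuine gap is in the extension step of part (1). Applying Remark~\ref{2.4} (3) to $E':=j_!E$ tells you nothing: since the unit $\eta_E\colon E\to j^{\ast}j_!E$ is invertible, the triangle identity $\epsilon_{j_!E}\circ j_!(\eta_E)=\id_{j_!E}$ shows that the counit $j_!j^{\ast}(j_!E)\to j_!E$ is an isomorphism (equivalently, $i^{\ast}j_!=0$), so your four-term sequence degenerates to $0\to 0\to j_!E\to j_!E\to 0\to 0$ and yields no information about membership of $j_!E$ in $\mathcal{C}$. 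Worse, even if it did not degenerate, invoking the hypothesis $j_!j^{\ast}(\mathcal{C})\subset\mathcal{C}$ on the term $j_!j^{\ast}(j_!E)\cong j_!E$ would be circular: it requires $E\cong j^{\ast}(j_!E)\in j^{\ast}(\mathcal{C})$, which is the very statement being proved.

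What the argument needs, and what the paper does, is to apply Remark~\ref{2.4} (3) to the kernel you set aside, not to $j_!E$. Put $K:=\Ker\bigl(j_!(g)\colon j_!E\to j_!j^{\ast}(C_2)\bigr)$, so that $0\to K\to j_!E\to j_!j^{\ast}(C_2)\to 0$ is exact. Exactness of $j^{\ast}$, Lemma~\ref{3.3} and $j^{\ast}j_!\cong\id_{\mathcal{A}''}$ give $j^{\ast}(K)\cong\Ker(g)\cong j^{\ast}(C_1)$, hence $j_!j^{\ast}(K)\in j_!j^{\ast}(\mathcal{C})\subset\mathcal{C}$; and for $K$, unlike for $j_!E$, the counit $j_!j^{\ast}(K)\to K$ is not an isomorphism in general, so the sequence $0\to i_{\ast}(M')\to j_!j^{\ast}(K)\to K\to i_{\ast}i^{\ast}(K)\to 0$ carries content. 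The image of $j_!j^{\ast}(K)\to K$ is the cokernel of $i_{\ast}(M')\to j_!j^{\ast}(K)$, a morphism between objects of $\mathcal{C}$ (using $i_{\ast}(\mathcal{A}')\subset\mathcal{C}$), hence lies in $\mathcal{C}$; then $K$ is an extension of $i_{\ast}i^{\ast}(K)\in i_{\ast}(\mathcal{A}')\subset\mathcal{C}$ by this image, so $K\in\mathcal{C}$. Extension-closure of $\mathcal{C}$ applied to $0\to K\to j_!E\to j_!j^{\ast}(C_2)\to 0$ now yields $j_!E\in\mathcal{C}$, and finally $E\cong j^{\ast}j_!E\in j^{\ast}(\mathcal{C})$. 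With this replacement your proof of (1) becomes the paper's.
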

		\begin{proof}
(1) It follows from the proof of \cite[Proposition 3.2]{Z17} that $j^{\ast}(\mathcal{C})$ is closed under cokernels.
Now we claim that $j^{\ast}(\mathcal{C})$ is closed under images. For this, let $\xymatrix{M\ar[r]^f & N}$ be a map in $j^{\ast}(\mathcal{C})$,
and consider the map $\xymatrix{j_{\ast}(M)\ar[r]^{j_{\ast}(f)} & j_{\ast}(N)}$ in $\mathcal{A}$. By Lemma~\ref{3.6}, we get $j_{\ast}(M)\in{j_{\ast}j^{\ast}(\mathcal{C})}\subset{\mathcal{C}}$ and $j_{\ast}(N)\in{j_{\ast}j^{\ast}(\mathcal{C})}\subset{\mathcal{C}}$,
and then $\Im j_{\ast}(f)\in{\mathcal{C}}$
since $\mathcal{C}$ is ICE-closed.
Note that $j^*$ is exact and $j^{\ast}j_{\ast}\cong\id_{\mathcal{A^{''}}}$, it follows from Lemma~\ref{3.3} that $\Im f\cong{\Im j^{\ast}j_{\ast}(f)}\cong{j^{\ast}(\Im j_{\ast}(f))}\in{j^{\ast}(\mathcal{C})}$.
This shows that $j^{\ast}(\mathcal{C})$ is closed under images, and it remains to show 			
$j^{\ast}(\mathcal{C})$ is closed under extensions.

			 Let
			$$\xymatrix{0\ar[r] & M\ar[r]^f & N\ar[r]^g & P\ar[r]&0}$$
be an exact sequence in $\mathcal{A^{''}}$ with $M\in{j^{\ast}(\mathcal{C})}$ and $P\in{j^{\ast}(\mathcal{C})}$.
Since $j_!$ is right exact, we have the following exact sequence
				$$\xymatrix{0\ar[r] & \Ker j_!(g)\ar[r] & j_!(N)\ar[r]^{j_!(g)} & j_!(P)\ar[r]&0.}\eqno {\rm (3.3)}$$
				Since $j_!j^{\ast}(\mathcal{C})\subset{\mathcal{C}}$, we get that $j_!(P)\in{\mathcal{C}}$.
On the other hand, it follows from Lemma~\ref{3.3} and $j^*j_!\cong \id _{\mathcal{A}''}$ that $j^{\ast}(\Ker j_{!}(g))\cong \Ker j^*j_!(g)\cong
\Ker g\cong{M}$, which implies that $j^{\ast}(\Ker j_{!}(g))\in{j^{\ast}(\mathcal{C})}$.
By Remark~\ref{2.4} (3), there exist some $M'\in \mathcal{A}'$ and an exact sequence $$0\rightarrow i_{\ast}(M^{'}) \rightarrow j_{!}j^{\ast}(\Ker j_{!}(g)) \rightarrow \Ker j_{!}(g) \rightarrow i_{\ast}i^{\ast}(\Ker j_{!}(g)) \rightarrow 0,$$ which
shows that $\Ker j_{!}(g)\in{\mathcal{C}}$
since $j_!j^{\ast}(\Ker j_{!}(g))\in j_!j^* \mathcal{C} \subset\mathcal{C}$ and $i_*(\mathcal{A}')\subset \mathcal{C}$.
Now it follows from the exact sequence (3.3) that $j_!(N)\in{\mathcal{C}}$,
and then $N\cong{j^{\ast}j_!(N)}\in{j^{\ast}(\mathcal{C})}$. This shows that 			
$j^{\ast}(\mathcal{C})$ is closed under extensions.

(2) In the proof of (1), the assumption $j_!j^{\ast}(\mathcal{C})\subset{\mathcal{C}}$ is not used
when we deal with images and cokernels. So,
by a similar argument as in (1), we get that $j^{\ast}(\mathcal{C})$ is closed under images and cokernels.
Now we show that $j^{\ast}(\mathcal{C})$ is closed under extensions when $j_{\ast}$ is exact.
				
					 Let
					$$\xymatrix{0\ar[r] & M\ar[r]^f & N\ar[r]^g & P\ar[r]&0}$$
					be an
					exact sequence in $\mathcal{A^{''}}$ with $M\in{j^{\ast}(\mathcal{C})}$ and $P\in{j^{\ast}(\mathcal{C})}$.
					Since $j_{\ast}$ is exact, we have the following exact sequence
					$$\xymatrix{0\ar[r] & j_{\ast}(M)\ar[r]^{j_{\ast}(f)} & j_{\ast}(N)\ar[r]^{j_{\ast}(g)} & j_{\ast}(P)\ar[r]&0.}\eqno {\rm (3.4)}$$
					It follows from Lemma~\ref{3.6} that $j_{\ast}j^{\ast}(\mathcal{C})\subset{\mathcal{C}}$, and then $j_{\ast}(M)\in{\mathcal{C}}$ and $j_{\ast}(P)\in{\mathcal{C}}$. Since $\mathcal{C}$ is ICE-closed, we infer that $j_{\ast}(N)\in{\mathcal{C}}$,
which is the middle term of the exact sequence (3.4).
As a result, we have $N\cong{j^{\ast}j_{\ast}(N)}\in{j^{\ast}(\mathcal{C})}$, and thus $j^{\ast}(\mathcal{C})$ is closed under extensions.
							\end{proof}

The following proposition is essentially due to \cite[Theorem 3.5]{Z17}.
		\begin{proposition}\label{3.8}
If $\mathcal{C}\in \mathcal{A}_{ice}$ and $i_\ast{i^\ast(\mathcal{C})}\subset\mathcal{C}$ (resp. $i_\ast{i^!(\mathcal{C})}\subset\mathcal{C}$), then $i^\ast(\mathcal{C})\in \mathcal{A}'_{ice}$ (resp. $i^!(\mathcal{C})\in \mathcal{A}'_{ice}$).	
		\end{proposition}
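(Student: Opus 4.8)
The plan is to prove both statements in parallel, since they are formally symmetric: I will treat the $i^\ast$ case in detail and obtain the $i^!$ case by the same argument with the obvious substitutions. The guiding observation is that $i^\ast$ itself is only right exact (Remark~\ref{2.4}(1)), so it does not commute with images or cokernels, and a direct verification on $i^\ast(\mathcal{C})$ is awkward. Instead I would route everything through the companion functor $i_\ast$, which is exact and fully faithful and satisfies $i^\ast i_\ast\cong\id_{\mathcal{A}'}$. The hypothesis $i_\ast i^\ast(\mathcal{C})\subset\mathcal{C}$ is exactly what lets me push objects of $i^\ast(\mathcal{C})$ back into the ICE-closed subcategory $\mathcal{C}$, where all the closure properties are available.

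For images and cokernels, given a map $f\colon M\to N$ in $i^\ast(\mathcal{C})$ I would write $M\cong i^\ast(C_1)$ and $N\cong i^\ast(C_2)$ with $C_1,C_2\in\mathcal{C}$, so that $i_\ast(M)\cong i_\ast i^\ast(C_1)$ and $i_\ast(N)\cong i_\ast i^\ast(C_2)$ both lie in $\mathcal{C}$ by hypothesis. Since $\mathcal{C}$ is closed under images and cokernels, $\Im i_\ast(f)\in\mathcal{C}$ and $\Coker i_\ast(f)\in\mathcal{C}$. Now the exactness of $i_\ast$ is the crucial ingredient: by Lemma~\ref{3.3} it yields $i_\ast(\Im f)\cong\Im i_\ast(f)$ and $i_\ast(\Coker f)\cong\Coker i_\ast(f)$, so both $i_\ast(\Im f)$ and $i_\ast(\Coker f)$ lie in $\mathcal{C}$. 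Applying $i^\ast$ and using $i^\ast i_\ast\cong\id_{\mathcal{A}'}$ then gives $\Im f\cong i^\ast i_\ast(\Im f)\in i^\ast(\mathcal{C})$ and likewise $\Coker f\in i^\ast(\mathcal{C})$.

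For extensions, I would take a short exact sequence $0\to M\to N\to P\to0$ in $\mathcal{A}'$ with $M,P\in i^\ast(\mathcal{C})$ and apply the exact functor $i_\ast$ to obtain a short exact sequence in $\mathcal{A}$ whose outer terms $i_\ast(M),i_\ast(P)$ again lie in $\mathcal{C}$ by the hypothesis $i_\ast i^\ast(\mathcal{C})\subset\mathcal{C}$. Closure of $\mathcal{C}$ under extensions gives $i_\ast(N)\in\mathcal{C}$, whence $N\cong i^\ast i_\ast(N)\in i^\ast(\mathcal{C})$. This establishes that $i^\ast(\mathcal{C})$ is ICE-closed. The $i^!$ case is word-for-word the same once $i_\ast i^\ast(\mathcal{C})\subset\mathcal{C}$ is replaced by $i_\ast i^!(\mathcal{C})\subset\mathcal{C}$ and the isomorphism $i^\ast i_\ast\cong\id_{\mathcal{A}'}$ by $i^! i_\ast\cong\id_{\mathcal{A}'}$; note that $i^!$ is only left exact, so here too the real work is done by the exactness of $i_\ast$. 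I do not expect a serious obstacle: the only point that requires care is to invoke Lemma~\ref{3.3} for the exact functor $i_\ast$ rather than for the non-exact $i^\ast$ (or $i^!$), and to keep track of which counit isomorphism applies in each of the two cases.
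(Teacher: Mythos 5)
Your proof is correct and takes essentially the same route as the paper: transport the map (or short exact sequence) into $\mathcal{C}$ via the exact, fully faithful functor $i_\ast$ using the hypothesis $i_\ast i^\ast(\mathcal{C})\subset\mathcal{C}$, apply the ICE-closure of $\mathcal{C}$ there, and return via $i^\ast i_\ast\cong\id_{\mathcal{A}'}$ together with Lemma~\ref{3.3} applied to the exact $i_\ast$. The only difference is presentational: the paper carries out this mechanism only for images and delegates the cokernel and extension steps to \cite[Theorem 3.5]{Z17}, whereas you execute all three closure properties explicitly by the same argument, making the proof self-contained.
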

	\begin{proof}
		Assume that $\mathcal{C}\subset{\mathcal{A}}$ is ICE-closed and $i_\ast{i^\ast(\mathcal{C})}\subset\mathcal{C}$. We only prove that ${i^\ast(\mathcal{C})}\subset{\mathcal{A^{'}}}$ is ICE-closed,
and then the statement on $i^!$ can be proved similarly.
		
	The proof that $i^{\ast}(\mathcal{C})$ is closed under cokernels and extensions is the same as \cite[Theorem 3.5]{Z17}.
	If $\xymatrix{M\ar[r]^f & N}$ is a map in $i^{\ast}(\mathcal{C})$, we only have to prove that $\Im f\in{i^{\ast}(\mathcal{C})}$. Applying $i_{\ast}$ to $f$, we have a map $\xymatrix{i_{\ast}(M)\ar[r]^{i_{\ast}(f)} & i_{\ast}(N)}$ in $\mathcal{A}$.
It follows from $i_\ast{i^\ast(\mathcal{C})}\subset\mathcal{C}$ that $i_*(f)$ is a map in $\mathcal{C}$.
Since $\mathcal{C}$ is ICE-closed,
we have $\Im i_*(f)\in \mathcal{C}$, and then $\Im f\cong{i^{\ast}i_{\ast}(\Im f)}\cong i^*(\Im i_*(f))\in{i^{\ast}(\mathcal{C})}$.
	\end{proof}

Next, we will show that starting from an ICE-closed subcategory of $\mathcal{A}''$, we can also find a corresponding
ICE-closed subcategory of $\mathcal{A}$ via the functor $j^*$, as shown in the following lemma.

	\begin{lemma}\label{3.9}
Let $\xymatrix{\mathcal{A}\ar[r]^F & \mathcal{B}}$ be a functor between abelian categories. If $F$ is exact and $\mathcal{W}\in{\mathcal{B}_{ice}}$,
 then $\mathcal{C}=\left\lbrace {M\in{\mathcal{A}}|F(M)\in \mathcal{W}}\right\rbrace \in{\mathcal{A}_{ice}}$.
	\end{lemma}
	\begin{proof}
This can be proved similarly as \cite[proposition 3.3]{Z17}.	
	\end{proof}
	
Now we get the following consequence, which is the converse of Proposition~\ref{3.7} and Proposition~\ref{3.8}.
	
	\begin{corollary}\label{3.10}
		
{\rm (1)} If $\mathcal{W}\in{\mathcal{A}^{''}}_{ice}$, then $\mathcal{C}=\left\lbrace {M\in{\mathcal{A}}\ |\ j^{\ast}(M)\in{\mathcal{W}}}\right\rbrace \in{_{i_{\ast}(\mathcal{A^{'}})}\mathcal{A}_{ice}}$.

{\rm (2)} If $i^{\ast}$ is exact and $\mathcal{W}\in{\mathcal{A}^{'}}_{ice}$, then $\mathcal{C}=\left\lbrace {M\in{\mathcal{A}}\ |\ i^{\ast}(M)\in{\mathcal{W}}}\right\rbrace \in{\mathcal{A}_{ice}}$.

{\rm (3)} If $i^!$ is exact and $\mathcal{W}\in{\mathcal{A}^{'}}_{ice}$, then $\mathcal{C}=\left\lbrace {M\in{\mathcal{A}}\ |\ i^!(M)\in{\mathcal{W}}}\right\rbrace \in{\mathcal{A}_{ice}}$.
\end{corollary}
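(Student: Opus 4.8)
The plan is to reduce all three statements to a single application of Lemma~\ref{3.9}, since each describes the preimage, under one of the structural functors of the recollement, of an ICE-closed subcategory of $\mathcal{A}'$ or $\mathcal{A}''$. Lemma~\ref{3.9} requires only that the functor in question be exact, so the first thing I would check is exactness: $j^*$ is exact by Remark~\ref{2.4} (1), while $i^*$ and $i^!$ are exact precisely under the extra hypotheses imposed in (2) and (3).

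For (2) and (3) I would then simply invoke Lemma~\ref{3.9} with $F=i^*$ (resp. $F=i^!$) and $\mathcal{W}\in\mathcal{A}'_{ice}$. Because the relevant functor is exact and takes values in $\mathcal{A}'$, the lemma immediately yields that $\mathcal{C}=\{M\in\mathcal{A}\mid i^*(M)\in\mathcal{W}\}$ (resp. with $i^!$ in place of $i^*$) lies in $\mathcal{A}_{ice}$, which is the entire claim in these two parts.

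For (1), applying Lemma~\ref{3.9} with the exact functor $F=j^*$ and $\mathcal{W}\in\mathcal{A}''_{ice}$ already shows $\mathcal{C}\in\mathcal{A}_{ice}$. The additional content is the membership $\mathcal{C}\in{_{i_*(\mathcal{A}')}\mathcal{A}_{ice}}$, that is, $i_*(\mathcal{A}')\subset\mathcal{C}$. Here I would exploit the defining condition $\Ker j^*=\Im i_*$ of a recollement: for every $A'\in\mathcal{A}'$ we have $i_*(A')\in\Im i_*=\Ker j^*$, hence $j^*(i_*(A'))=0$. Since $\mathcal{W}$ is closed under cokernels and, being non-empty, contains the zero object (the cokernel of any identity morphism), we have $0\in\mathcal{W}$, so $i_*(A')$ satisfies the defining condition for $\mathcal{C}$. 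This gives $i_*(\mathcal{A}')\subset\mathcal{C}$ and completes (1).

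The whole argument is routine once Lemma~\ref{3.9} is available; the only step deserving attention is the verification that $0\in\mathcal{W}$, which presupposes that the ICE-closed subcategories under consideration are non-empty. This is the sole—and admittedly minor—obstacle, and I would flag it explicitly so that the degenerate empty case does not spoil the containment $i_*(\mathcal{A}')\subset\mathcal{C}$.
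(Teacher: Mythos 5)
Your proof is correct and follows essentially the same route as the paper: all three parts are reduced to Lemma~\ref{3.9} via the exactness of $j^{\ast}$, $i^{\ast}$ and $i^{!}$, and the extra containment $i_{\ast}(\mathcal{A}')\subset\mathcal{C}$ in (1) is obtained from $j^{\ast}i_{\ast}=0$ together with $0\in\mathcal{W}$, the zero object being the cokernel of an identity map. Your explicit remark that this last step presupposes $\mathcal{W}$ non-empty is a fair observation that the paper leaves implicit, but it does not change the argument.
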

		\begin{proof}
 In view of Lemma~\ref{3.9}, it suffices to show $i_*(\mathcal{A}')\subset \mathcal{C}$ in the case of (1).
Indeed, this follows from $j^{\ast}i_{\ast}=0$ and the fact that the zero object, which is the cokernel of the identity
map, always belongs to $\mathcal{W}$.   			
		\end{proof}

In conclusion, we have the following theorem, which is an analogy of \cite[Theorem 3.4]{Z17}.
		\begin{theorem}\label{thm-bij}
There is a bijection $$\xymatrix@!=4pc{\{\mathcal{C}\in {_{i_{\ast}(\mathcal{A^{'}})}\mathcal{A}_{ice}}
\ | \  j_!j^*\mathcal{C}\subset \mathcal{C}\}\ar[rr]^-{1:1} & &\ar[ll]\mathcal{A''}_{ice}}$$
which sends any $\mathcal{C}\in{_{i_{\ast}(\mathcal{A^{'}})}\mathcal{A}_{ice}}$ to $j^*(\mathcal{C})$,
and sends any $\mathcal{W}\in{\mathcal{A}^{''}}_{ice}$ to $\left\lbrace {M\in{\mathcal{A}}\ |\ j^{\ast}(M)\in{\mathcal{W}}}\right\rbrace$.
		\end{theorem}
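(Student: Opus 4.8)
The plan is to verify that the two assignments are well-defined maps between the indicated sets and then that they are mutually inverse. For the forward direction, Proposition~\ref{3.7}(1) already shows that whenever $\mathcal{C}\in{_{i_{\ast}(\mathcal{A^{'}})}\mathcal{A}_{ice}}$ satisfies $j_!j^\ast\mathcal{C}\subset\mathcal{C}$, its image $j^\ast(\mathcal{C})$ lies in $\mathcal{A''}_{ice}$, so $\mathcal{C}\mapsto j^\ast(\mathcal{C})$ lands in the right-hand set. For the backward direction, Corollary~\ref{3.10}(1) guarantees that $\mathcal{C}_{\mathcal{W}}:=\{M\in\mathcal{A}\mid j^\ast(M)\in\mathcal{W}\}$ belongs to ${_{i_{\ast}(\mathcal{A^{'}})}\mathcal{A}_{ice}}$. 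The one extra point I would check here is that $\mathcal{C}_{\mathcal{W}}$ itself satisfies the membership condition $j_!j^\ast\mathcal{C}_{\mathcal{W}}\subset\mathcal{C}_{\mathcal{W}}$ of the left-hand set: for $M\in\mathcal{C}_{\mathcal{W}}$ the isomorphism $j^\ast j_!\cong\id_{\mathcal{A''}}$ of Remark~\ref{2.4}(2) gives $j^\ast(j_!j^\ast(M))\cong j^\ast(M)\in\mathcal{W}$, whence $j_!j^\ast(M)\in\mathcal{C}_{\mathcal{W}}$. Thus both assignments are well-defined, and it remains to check the two composites.

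The composite $\mathcal{W}\mapsto\mathcal{C}_{\mathcal{W}}\mapsto j^\ast(\mathcal{C}_{\mathcal{W}})$ I would handle directly. The inclusion $j^\ast(\mathcal{C}_{\mathcal{W}})\subseteq\mathcal{W}$ is immediate from the definition of $\mathcal{C}_{\mathcal{W}}$. For the reverse inclusion, given $W\in\mathcal{W}$ I take $M=j_!(W)$; then $j^\ast(M)\cong W\in\mathcal{W}$ shows $M\in\mathcal{C}_{\mathcal{W}}$, and $j^\ast(M)\cong W$ then exhibits $W\in j^\ast(\mathcal{C}_{\mathcal{W}})$. Hence $j^\ast(\mathcal{C}_{\mathcal{W}})=\mathcal{W}$.

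The real content lies in the other composite $\mathcal{C}\mapsto j^\ast(\mathcal{C})\mapsto\{M\mid j^\ast(M)\in j^\ast(\mathcal{C})\}$, which I must show equals $\mathcal{C}$. The inclusion $\mathcal{C}\subseteq\{M\mid j^\ast(M)\in j^\ast(\mathcal{C})\}$ is trivial; the nontrivial inclusion supposes $j^\ast(M)\cong j^\ast(C)$ for some $C\in\mathcal{C}$ and must conclude $M\in\mathcal{C}$. Here I would invoke the four-term exact sequence of Remark~\ref{2.4}(3),
$$0\rightarrow i_{\ast}(M^{'}) \rightarrow j_{!}j^{\ast}(M) \rightarrow M \rightarrow i_{\ast}i^{\ast}(M) \rightarrow0,$$
and exploit both standing hypotheses on $\mathcal{C}$. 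Since $j^\ast(M)\cong j^\ast(C)$, applying $j_!$ gives $j_!j^\ast(M)\cong j_!j^\ast(C)\in\mathcal{C}$ by the condition $j_!j^\ast\mathcal{C}\subset\mathcal{C}$. Splitting the four-term sequence at the image $K$ of its middle map produces a short exact sequence $0\to i_\ast(M')\to j_!j^\ast(M)\to K\to 0$ in which both $i_\ast(M')\in\mathcal{C}$ (from $i_\ast(\mathcal{A'})\subset\mathcal{C}$) and $j_!j^\ast(M)\in\mathcal{C}$; as $K$ is the cokernel of the first map and $\mathcal{C}$ is ICE-closed, $K\in\mathcal{C}$. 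The remaining short exact sequence $0\to K\to M\to i_\ast i^\ast(M)\to 0$ then has $K\in\mathcal{C}$ and $i_\ast i^\ast(M)\in i_\ast(\mathcal{A'})\subset\mathcal{C}$, so closure under extensions forces $M\in\mathcal{C}$. This is precisely the step where both defining conditions of the left-hand set — containment of $i_\ast(\mathcal{A'})$ and stability under $j_!j^\ast$ — are genuinely needed, and I expect it to be the main obstacle; the rest of the proof is bookkeeping with the adjunction isomorphisms of Remark~\ref{2.4}(2).
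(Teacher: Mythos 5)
Your proof is correct and takes essentially the same approach as the paper: well-definedness via Proposition~\ref{3.7}(1) and Corollary~\ref{3.10}(1), together with the same one-line adjunction check that $j_!j^{\ast}(\mathcal{C}_{\mathcal{W}})\subset\mathcal{C}_{\mathcal{W}}$. The only difference is that where the paper defers the mutual-inverse verification to \cite[Theorem 3.4]{Z17}, you spell it out: your splitting of the four-term sequence of Remark~\ref{2.4}(3) at the image $K$, using cokernel-closure for $K\in\mathcal{C}$ and extension-closure for $M\in\mathcal{C}$, is precisely the cited argument adapted to the ICE-closed setting, correctly replacing the automatic stability $j_!j^{\ast}\mathcal{C}\subset\mathcal{C}$ of the wide case by the standing hypothesis.
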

		\begin{proof}
For any $\mathcal{C}\in{_{i_{\ast}(\mathcal{A^{'}})}\mathcal{A}_{ice}}$
with $j_!j^{\ast}(\mathcal{C})\subset{\mathcal{C}}$, it follows from Proposition~\ref{3.7} (1)
that $j^{\ast}(\mathcal{C})\in{\mathcal{A^{''}}_{ice}}$. Conversely, for any
 $\mathcal{W}\in{\mathcal{A}^{''}}_{ice}$, we have
 $\left\lbrace {M\in{\mathcal{A}}\ |\ j^{\ast}(M)\in{\mathcal{W}}}\right\rbrace \in{_{i_{\ast}(\mathcal{A^{'}})}\mathcal{A}_{ice}}$
by Corollary~\ref{3.10} (1). Denote by $\mathcal{C}':=\left\lbrace {M\in{\mathcal{A}}\ |\ j^{\ast}(M)\in{\mathcal{W}}}\right\rbrace $,
and we have to prove $j_!j^{\ast}(\mathcal{C}')\subset{\mathcal{C}'}$. But this is trivial
because $j^*j_!j^*\mathcal{C}'=j^*\mathcal{C}'\subset \mathcal{W}$.
Therefore, the maps in this theorem are well-defined, and we need to show they are inverse to each other.
Indeed, this can be
done by using the same argument as \cite[Theorem 3.4]{Z17}.
		\end{proof}

We mention that
the assumption $j_!j^{\ast}(\mathcal{C})\subset{\mathcal{C}}$ in Theorem~\ref{thm-bij} is necessary,
while it is not needed in \cite[Theorem 3.4]{Z17}.
The reason is that $j_!j^{\ast}(\mathcal{C})\subset{\mathcal{C}}$ always hold true
for any wide subcategory $\mathcal{C}$, see \cite[Lemma 3.1]{Z17}.
But this is not the case for ICE-closed subcategory.
As an analogy of \cite[Theorem 3.8]{Z17}, we have the following conclusion with the additional condition
that $j_!j^{\ast}(\mathcal{C})\subset{\mathcal{C}}$.
		
	\begin{theorem}\label{thm-recoll}
If $\mathcal{C}\in{_{i_{\ast}(\mathcal{A^{'}})}\mathcal{A}_{ice}}$
and $j_!j^{\ast}(\mathcal{C})\subset{\mathcal{C}}$, then we get a new recollement of ICE-closed subcategories $(  \mathcal{A^{'}},\mathcal{C},j^\ast(\mathcal{C}),\overline{i^\ast},\overline{i_\ast},\overline{i^!},\overline{j_!},\overline{j^\ast},\overline{j_\ast})$
induced by the restriction functors. 		
	\end{theorem}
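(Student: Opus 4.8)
The plan is to produce the six restricted functors, check that each one actually lands in the subcategory prescribed by the new diagram, and then verify the three defining conditions of a recollement for these restrictions. Note first that $j^\ast(\mathcal{C})\in\mathcal{A^{''}}_{ice}$ by Proposition~\ref{3.7}(1), so the three ``categories'' $\mathcal{A}'$, $\mathcal{C}$ and $j^\ast(\mathcal{C})$ of the putative recollement are all ICE-closed. The six functors are the corestrictions $\overline{i^\ast},\overline{i^!}\colon\mathcal{C}\to\mathcal{A}'$, $\overline{i_\ast}\colon\mathcal{A}'\to\mathcal{C}$, $\overline{j^\ast}\colon\mathcal{C}\to j^\ast(\mathcal{C})$, and $\overline{j_!},\overline{j_\ast}\colon j^\ast(\mathcal{C})\to\mathcal{C}$ of the original functors.

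First I would check well-definedness, i.e.\ that each functor takes values in its stated target. For $\overline{i^\ast}$ and $\overline{i^!}$ this is automatic, since their target is the whole category $\mathcal{A}'$. For $\overline{i_\ast}$ it is exactly the hypothesis $i_\ast(\mathcal{A}')\subset\mathcal{C}$, and for $\overline{j^\ast}$ it holds by the very definition of $j^\ast(\mathcal{C})$ as the essential image of $j^\ast$ on $\mathcal{C}$. The two interesting cases are $\overline{j_!}$ and $\overline{j_\ast}$: for an object $N\cong j^\ast(C)$ with $C\in\mathcal{C}$ we have $j_!(N)\cong j_!j^\ast(C)\in\mathcal{C}$, which uses precisely the hypothesis $j_!j^\ast(\mathcal{C})\subset\mathcal{C}$, and $j_\ast(N)\cong j_\ast j^\ast(C)\in\mathcal{C}$ by Lemma~\ref{3.6}. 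Since $\mathcal{C}$ is ICE-closed it is closed under isomorphism, so these conclusions are independent of the chosen representative $C$.

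Next I would verify the recollement axioms. Because $\mathcal{C}$ is a full subcategory of $\mathcal{A}$ and $j^\ast(\mathcal{C})$ is a full subcategory of $\mathcal{A}''$, every $\Hom$-set occurring in the four adjunctions of the new diagram coincides with the corresponding $\Hom$-set in the ambient category; hence each of the adjunction isomorphisms for $(\overline{i^\ast},\overline{i_\ast})$, $(\overline{i_\ast},\overline{i^!})$, $(\overline{j_!},\overline{j^\ast})$ and $(\overline{j^\ast},\overline{j_\ast})$ is simply the restriction of the original one, together with the restricted units and counits. For the same reason, full faithfulness of $\overline{i_\ast}$, $\overline{j_!}$ and $\overline{j_\ast}$ is inherited from that of $i_\ast$, $j_!$ and $j_\ast$. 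Finally, for the condition $\Ker\overline{j^\ast}=\Im\overline{i_\ast}$ I would argue that an object $C\in\mathcal{C}$ satisfies $\overline{j^\ast}(C)=0$ exactly when $j^\ast(C)=0$, i.e.\ when $C\in\Ker j^\ast=\Im i_\ast$; since every object of $\Im i_\ast$ already lies in $\mathcal{C}$, this says precisely that $\Ker\overline{j^\ast}=\mathcal{C}\cap\Im i_\ast=\Im\overline{i_\ast}$.

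The main obstacle is not the verification itself, which is essentially formal once the functors are in place, but guaranteeing that $\overline{j_!}$ is defined at all; this is exactly the role of the extra hypothesis $j_!j^\ast(\mathcal{C})\subset\mathcal{C}$, which, unlike in the wide case treated in \cite{Z17}, cannot be dropped. A secondary point to settle explicitly is the meaning of a ``recollement of ICE-closed subcategories,'' since $\mathcal{C}$ and $j^\ast(\mathcal{C})$ need not be abelian; I would interpret $\Ker\overline{j^\ast}$ as the full subcategory of objects annihilated by $\overline{j^\ast}$ and $\Im\overline{i_\ast}$ as the essential image, in which case all the axioms reduce to the ambient recollement $(\mathcal{A}',\mathcal{A},\mathcal{A}'',i^\ast,i_\ast,i^!,j_!,j^\ast,j_\ast)$ as above, mirroring the argument of \cite[Theorem 3.8]{Z17}.
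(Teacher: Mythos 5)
Your proposal is correct and takes essentially the same approach as the paper: restrict the six functors, using $i_\ast(\mathcal{A}')\subset\mathcal{C}$, the hypothesis $j_!j^\ast(\mathcal{C})\subset\mathcal{C}$, and Lemma~\ref{3.6} for well-definedness, with $j^\ast(\mathcal{C})$ ICE-closed by Proposition~\ref{3.7}(1), and then verify the adjunctions, full faithfulness and $\Ker\overline{j^\ast}=\Im\overline{i_\ast}$ by restriction from the ambient recollement. The only difference is expository: the paper delegates these formal verifications to \cite[Proposition 3.6, Lemma 3.7 and Theorem 3.8]{Z17}, whereas you spell them out in full.
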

	
	\begin{proof}
By Lemma~\ref{3.7} (1), we get that $j^*(\mathcal{C})$ is an ICE-closed subcategory.
Since $i_*i^*(\mathcal{C})\subset \mathcal{C}$ and $i_*i^!(\mathcal{C})\subset \mathcal{C}$,
it follows from \cite[Proposition 3.6]{Z17} and \cite[Lemma 3.7]{Z17} that the functors
$i^*,i_*$ and $i^!$ can be restricted to
adjoint pairs between the corresponding subcategories, and so are
the functors $j_!,j^*$ and $j_*$, because $j_!j^{\ast}(\mathcal{C})\subset{\mathcal{C}}$ and $j_*j^{\ast}(\mathcal{C})\subset{\mathcal{C}}$,
see Lemma~\ref{3.6}.
The rest of the proof is the same as \cite[theorem 3.8]{Z17}.
	\end{proof}
	
Let's end this section by a similar discussion on torsion classes.
	\begin{corollary}\label{cor-tor}	
		{\rm (1)} If $\mathcal{C}\in{\mathcal{A^{'}}_{tor}}$, then $i_{\ast}(\mathcal{C})\in{\mathcal{A}_{tor}}$.

		{\rm (2)} If $i^!$ (resp. $i^{\ast}$) is exact and $\mathcal{C}\in{\mathcal{A^{''}}_{tor}}$,
 then $j_{\ast}(\mathcal{C})\in{\mathcal{A}_{tor}}$ (resp. $j_!(\mathcal{C})\in{\mathcal{A}_{tor}}$).

		{\rm (3)} If $\mathcal{C}\in{_{i_{\ast}(\mathcal{A^{'}})}\mathcal{A}_{tor}}$, then the following hold.
		
		{\rm (a)} If $j_!j^{\ast}(\mathcal{C})\subset{\mathcal{C}}$, then $j^{\ast}(\mathcal{C})\in{\mathcal{A^{''}}_{tor}}$.
		
		{\rm (b)} If $j_{\ast}$ is exact, then $j^{\ast}(\mathcal{C})\in{\mathcal{A^{''}}_{tor}}$.

		{\rm (4)} If $\mathcal{C}\in \mathcal{A}_{tor}$ and $i_\ast{i^\ast(\mathcal{C})}\subset\mathcal{C}$ (resp. $i_\ast{i^!(\mathcal{C})}\subset\mathcal{C}$), then $i^\ast(\mathcal{C})\in \mathcal{A}'_{tor}$ (resp. $i^!(\mathcal{C})\in \mathcal{A}'_{tor}$).

		{\rm (5)} If $\mathcal{W}\in{\mathcal{A}^{''}}_{tor}$, then $\mathcal{C}=\left\lbrace {M\in{\mathcal{A}}\ |\ j^{\ast}(M)\in{\mathcal{W}}}\right\rbrace \in{_{i_{\ast}(\mathcal{A^{'}})}\mathcal{A}_{tor}}$.
	
		{\rm (6)} If $i^{\ast}$ is exact and $\mathcal{W}\in{\mathcal{A}^{'}}_{tor}$, then $\mathcal{C}=\left\lbrace {M\in{\mathcal{A}}\ |\ i^{\ast}(M)\in{\mathcal{W}}}\right\rbrace \in{\mathcal{A}_{tor}}$.
		
		{\rm (7)} If $i^!$ is exact and $\mathcal{W}\in{\mathcal{A}^{'}}_{tor}$, then $\mathcal{C}=\left\lbrace {M\in{\mathcal{A}}\ |\ i^!(M)\in{\mathcal{W}}}\right\rbrace \in{\mathcal{A}_{tor}}$.
		
	\end{corollary}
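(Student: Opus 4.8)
The plan is to run the seven parts in parallel with their ICE-closed counterparts (Theorems~\ref{3.4} and~\ref{3.5}, Propositions~\ref{3.7} and~\ref{3.8}, Lemma~\ref{3.9} and Corollary~\ref{3.10}), using that a torsion class is precisely a subcategory closed under quotients and extensions and that every torsion class is in particular ICE-closed. For every item except part~(2), the extension-closure half is obtained from the corresponding ICE argument with only cosmetic changes, since those arguments invoke nothing beyond closure under extensions. The genuinely new points are to replace the ``closed under images and cokernels'' bookkeeping by ``closed under quotients'', and, in part~(2), to accommodate a changed exactness hypothesis.

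For the quotient-closure half of parts~(1), (4), (5), (6) and~(7) I would use a single recurring move: apply the relevant functor that is exact and fully faithful ($i_*$ in (1) and (4); $j^*$, $i^*$ or $i^!$ in (5)--(7)) to an epimorphism $M\twoheadrightarrow N$. Since an exact functor preserves epimorphisms, the resulting epimorphism has its source in the ambient torsion class (using $i_*i^*(\mathcal{C})\subset\mathcal{C}$, $i_*i^!(\mathcal{C})\subset\mathcal{C}$, or the defining preimage condition), so quotient-closure upstairs produces the target downstairs, and one transports the conclusion back through the natural isomorphisms $i^*i_*\cong\id$, $i^!i_*\cong\id$, $j^*j_*\cong\id$, $j^*j_!\cong\id$ of Remark~\ref{2.4}(2). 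For part~(1) one first notes, exactly as in Theorem~\ref{3.4}, that any quotient of $i_*(M')$ again lies in $\Ker j^*=\Im i_*$, so the quotient has the form $i_*(N')$ and the map descends to $\mathcal{A'}$; right-exactness of $i^*$ makes the descended map epic, and quotient-closure of $\mathcal{C}$ finishes it. Parts~(5)--(7) reduce to a torsion-class version of Lemma~\ref{3.9}: if $F$ is exact and $\mathcal{W}$ is a torsion class, then $\{M:F(M)\in\mathcal{W}\}$ is a torsion class, both closures following because $F$ preserves epimorphisms and short exact sequences; the membership $i_*(\mathcal{A'})\subset\mathcal{C}$ in~(5) is handled as in Corollary~\ref{3.10}, via $j^*i_*=0$ and $0\in\mathcal{W}$. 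Part~(3) copies Proposition~\ref{3.7}: for~(a), apply the right-exact $j_!$ to an epimorphism to obtain $j_!(N)\in\mathcal{C}$ from $j_!j^*(\mathcal{C})\subset\mathcal{C}$ and quotient-closure, and in the extension step split the four-term sequence of Remark~\ref{2.4}(3) into two short exact sequences so that quotient-closure and extension-closure replace the image/kernel manipulations; for~(b) use the exact $j_*$ together with Lemma~\ref{3.6}.

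The main obstacle is part~(2), because its hypothesis is genuinely different from the ICE-closed analogue in Theorem~\ref{3.5}: there one assumes $j_*$ (resp.\ $j_!$) exact, while here one assumes $i^!$ (resp.\ $i^*$) exact. The key is to isolate the following claim, which I expect to be the technical heart: \emph{if $i^!$ is exact and $N\in\mathcal{A}$ satisfies $i^!(N)=0$, then the unit $\eta_N\colon N\to j_*j^*(N)$ is an isomorphism.} Granting the standard recollement identity $i^!j_*=0$ (which follows from $j^*i_*=0$ by adjunction), the four-term sequence of Remark~\ref{2.4}(3) collapses, once $i_*i^!(N)=0$, to $0\to N\to j_*j^*(N)\to i_*(N')\to 0$; applying the exact $i^!$ and using $i^!j_*=0$ together with $i^!i_*\cong\id$ forces $N'=0$, so $\eta_N$ is an isomorphism. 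With this claim in hand both halves of~(2) are immediate: for a short exact sequence (resp.\ an epimorphism) with outer terms (resp.\ source) in $j_*(\mathcal{C})$, exactness of $j^*$ places $j^*(N)$ in $\mathcal{C}$ by extension- (resp.\ quotient-) closure, while exactness of $i^!$ with $i^!j_*=0$ gives $i^!(N)=0$; the claim then identifies $N$ with $j_*j^*(N)\in j_*(\mathcal{C})$. The statement for $j_!$ under $i^*$ exact is entirely dual, using the first sequence of Remark~\ref{2.4}(3), the identity $i^*j_!=0$, and the counit $j_!j^*\to\id$.
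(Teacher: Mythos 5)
Your proposal is correct, and for parts (1) and (3)--(7) it is essentially the paper's own proof: extension-closure is inherited from the ICE-closed results (Theorems~\ref{3.4}, \ref{3.5}, Propositions~\ref{3.7}, \ref{3.8}, Corollary~\ref{3.10}) because every torsion class is ICE-closed, while quotient-closure follows from the same ``apply the appropriate (right) exact functor to an epimorphism and transport back through $i^*i_*\cong\id$, $j^*j_*\cong\id$, $j^*j_!\cong\id$'' move you describe; your torsion-class version of Lemma~\ref{3.9} for (5)--(7) is exactly what the paper does. The genuine divergence is in part (2). There the paper does not argue from first principles: it invokes \cite[Lemma 3.1 (4)]{FZ17} or \cite[Theorem 4.1]{TO79} to conclude that exactness of $i^!$ forces $j_*$ to be exact with $\Im j_*=\Ker i^!$, after which quotient-closure runs as in (1) and extension-closure is delegated to Theorem~\ref{3.5}. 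You instead isolate and prove the needed fragment yourself: from the second four-term sequence of Remark~\ref{2.4}(3), the identity $i^!j_*=0$ (correctly derived from $j^*i_*=0$ by the two adjunctions), and exactness of $i^!$, you show the unit $\eta_N\colon N\to j_*j^*(N)$ is an isomorphism whenever $i^!(N)=0$ --- which is precisely the inclusion $\Ker i^!\subset\Im j_*$ from the cited lemma. Your route is self-contained and, as a small bonus, never needs $j_*$ to be exact at all: in the extension step you apply the exact $j^*$ to the short exact sequence in $\mathcal{A}$ and then use the claim, rather than pushing a sequence forward along $j_*$. The paper's citation-based route is shorter and yields the stronger structural statement ($j_*$ exact, $\Im j_*=\Ker i^!$) for reuse; yours keeps the corollary independent of external results at the cost of a short extra lemma, and the dual argument for $j_!$ under exactness of $i^*$ (via $i^*j_!=0$ and the first sequence of Remark~\ref{2.4}(3)) goes through verbatim as you indicate.
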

\begin{proof}
	(1) For any $\mathcal{C}\in{\mathcal{A^{'}}_{tor}}$, we have that $\mathcal{C}\in{\mathcal{A^{'}}_{ice}}$
and then $i_{\ast}(\mathcal{C})\in{\mathcal{A}_{ice}}$ by Theorem~\ref{3.4}.
Therefore, we get that $i_{\ast}(\mathcal{C})$ is closed under extensions, and we only need to show
$i_{\ast}(\mathcal{C})$ is closed under quotients.
Let $\xymatrix{ M\ar[r] & N\ar[r]&0}$ be an exact sequence in $\mathcal{A}$ with $M=i_{\ast}(M^{'})$ and $M^{'}\in{\mathcal{C}}$. Since $j^{\ast}$ is exact,
we get an exact sequence $\xymatrix{ j^{\ast}(M)\ar[r] & j^{\ast}(N)\ar[r]&0}$ in $\mathcal{A}''$.
It follows from $\Im i_*=\Ker j^*$ that $j^*M=j^*i_*M'=0$, and then
$j^{\ast}(N)=0$.
Therefore, we infer $N\in \Ker j^*$ and then $N=i_{\ast}(N^{'})$ for some $N^{'}\in{\mathcal{A^{'}}}$.
Since $i^{\ast}$ is right exact, we get an exact sequence $\xymatrix{ i^{\ast}(M)\ar[r] & i^{\ast}(N)\ar[r]&0}$,
where $i^*M=i^*i_*M'\cong M'\in \mathcal{C}$. Now note that $\mathcal{C}$ is closed under quotients,
and then
$i^*N\in \mathcal{C}$. Therefore, we get that $N'\cong i^*i_*N'=i^*N\in \mathcal{C}$, and thus $N=i_{\ast}(N^{'})\in{i_{\ast}(\mathcal{C})}$.
	
(2) Assume that $i^!$ is exact. Then it follows from
\cite[Lemma 3.1 (4)]{FZ17} or \cite[Theorem 4.1]{TO79} that $j_{\ast}$ is exact
and $\Im j_{\ast}=\Ker i^!$.
Therefore, for any $\mathcal{C}\in{\mathcal{A^{''}}_{tor}}$,
we get that $j_*\mathcal{C}$ is closed under quotients in a similar way as we did in (1).
Moreover, it follows from Theorem~\ref{3.5} that $j_*\mathcal{C}$ is closed under extensions.
As a result, we get that $j_{\ast}(\mathcal{C})\in{\mathcal{A}_{tor}}$, and the statement on
$j_!(\mathcal{C})$ can be proved similarly.
	
	(3) For any $\mathcal{C}\in{_{i_{\ast}(\mathcal{A^{'}})}\mathcal{A}_{tor}}$,
it follows from Proposition~\ref{3.7} that $j^{\ast}(\mathcal{C})$ is closed under extensions.
Now we will claim $j^{\ast}(\mathcal{C})$ is closed under quotients in the following two cases.
	
	(a) Assume that $j_!j^{\ast}(\mathcal{C})\subset{\mathcal{C}}$,
and let $\xymatrix{ M\ar[r] & N\ar[r]&0}$ be an exact sequence in $\mathcal{A^{''}}$ with $M\in{j^{\ast}(\mathcal{C})}$. Since $j_{!}$ is right exact,
 we get an exact sequence $\xymatrix{ j_!(M)\ar[r] & j_!(N)\ar[r]&0}$, where $j_!M \in j_!j^{\ast}(\mathcal{C})\subset{\mathcal{C}}$.
 Since $\mathcal{C}$ is closed under quotients,
 we obtain that $j_!(N)\in{\mathcal{C}}$.
 Therefore, we get $N\cong{j^{\ast}j_!(N)}\in{j^{\ast}(\mathcal{C})}$
 and then $j^{\ast}(\mathcal{C})$ is closed under quotients.
	
	(b) Assume $j_{\ast}$ is exact, and let $\xymatrix{ M\ar[r] & N\ar[r]&0}$ be an exact
sequence in $\mathcal{A^{''}}$ with $M\in{j^{\ast}(\mathcal{C})}$.
Then the sequence $\xymatrix{ j_{\ast}(M)\ar[r] & j_{\ast}(N)\ar[r]&0}$is also exact.
Since $\mathcal{C}\in{_{i_{\ast}(\mathcal{A^{'}})}\mathcal{A}_{tor}}\subset
{_{i_{\ast}(\mathcal{A^{'}})}\mathcal{A}_{ice}}$,
it follows from lemma~\ref{3.6} that $j_{\ast}j^{\ast}(\mathcal{C})\subset{\mathcal{C}}$.
Therefore, we have that $j_*M\in \mathcal{C}$,
and thus
$j_{\ast}(N)\in{\mathcal{C}}$
since $\mathcal{C}$ is closed under quotients.
Hence, $N\cong{j^{\ast}j_{\ast}}(N)\in{j^{\ast}(\mathcal{C})}$ and $j^{\ast}(\mathcal{C})$ is closed under quotients.
	
(4) For any $\mathcal{C}\in \mathcal{A}_{tor}$ with $i_\ast{i^\ast(\mathcal{C})}\subset\mathcal{C}$,
it follows from Proposition~\ref{3.8} that $i^{\ast}(\mathcal{C})$ is closed under extensions.
Now we claim that $i^{\ast}(\mathcal{C})$ is closed under quotients.
Let $\xymatrix{ M\ar[r] & N\ar[r]&0}$ be an exact sequence in $\mathcal{A^{'}}$ with $M\in{i^{\ast}(\mathcal{C})}$.
Applying the exact functor $i_{\ast}$,
we get an exact sequence $\xymatrix{ i_{\ast}(M)\ar[r] & i_{\ast}(N)\ar[r]&0}$,
where $i_{\ast}M \in i_{\ast}{i^{\ast}(\mathcal{C})}\subset\mathcal{C}$.
Since $\mathcal{C}$ is closed under quotients, we infer that $i_{\ast}(N)\in\mathcal{C}$, and thus
$N\cong{i^{\ast}i_{\ast}(N)}\in{i^{\ast}(\mathcal{C})}$.
The proof that $i^!(\mathcal{C})\in \mathcal{A}'_{tor}$ when
 $i_\ast{i^!(\mathcal{C})}\subset\mathcal{C}$ is similar.

(5) Let $\mathcal{W}\in{\mathcal{A}^{''}}_{tor}$ and $\mathcal{C}=\left\lbrace {M\in{\mathcal{A}}\ |\ j^{\ast}(M)\in{\mathcal{W}}}\right\rbrace$.
Then it follows from Corollary~\ref{3.10} (1) that $\mathcal{C}$ is closed under extensions
and $i_*(\mathcal{A}')\subset \mathcal{C}$. Now we claim that $\mathcal{C}$ is closed under quotients.
Let $\xymatrix{ M\ar[r] & N\ar[r]&0}$ be an exact sequence in $\mathcal{A}$ with $M\in{\mathcal{C}}$. Then we get $j^{\ast}(M)\in{\mathcal{W}}$.
Applying the exact functor $j^{\ast}$, we get an exact sequence $\xymatrix{ j^{\ast}(M)\ar[r] & j^{\ast}(N)\ar[r]&0}$.
 Since $\mathcal{W}$ closed under quotients, we infer that $j^{\ast}(N)\in{\mathcal{W}}$,
and then $N\in{\mathcal{C}}$ by the definition of $\mathcal{C}$.

(6) and (7) can be proved in a similar way as we did in (5).

\end{proof}

	\section{Epibrick over recollements}\label{4}
	\indent\indent
In \cite{Z19}, it is shown that semibricks can be glued via recollements, and in this section,
we will observe
how to glue epibricks and monobricks.
The following lemma is an analogy of
\cite[Lemma 2.1]{Z19}, which says that both bricks and semibricks are preserved under fully faithful functors.
	
	\begin{lemma}\label{4.1}
Let $F:\mathcal{A}\rightarrow{\mathcal{B}}$ be a fully faithful functor
between abelian categories. If $F$ preserves epimorphisms (resp. monomorphisms), then
we have $F(\ebrick \mathcal{A})\subset \ebrick \mathcal{B}$ (resp.
$F(\mbrick \mathcal{A})\subset \mbrick \mathcal{B}$).
	\end{lemma}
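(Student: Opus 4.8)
The plan is to verify the two defining requirements of an epibrick directly, transporting each of them across $F$ by exploiting full faithfulness together with the preservation of epimorphisms. First I would check that $F$ carries bricks to bricks. For any $A\in\brick\mathcal{A}$, full faithfulness yields a ring isomorphism $\End_{\mathcal{A}}(A)\cong\End_{\mathcal{B}}(F(A))$ given by $f\mapsto F(f)$; since $\End_{\mathcal{A}}(A)$ is a division ring, so is $\End_{\mathcal{B}}(F(A))$, and hence $F(A)\in\brick\mathcal{B}$. This step also guarantees that the assignment $A\mapsto F(A)$ is well defined on isoclasses, since fully faithful functors reflect isomorphisms.

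Next I would fix an epibrick $S\in\ebrick\mathcal{A}$ and set $F(S)=\{F(A)\mid A\in S\}$, which lies in $\brick\mathcal{B}$ by the previous paragraph. To see that $F(S)$ satisfies the epibrick condition, take an arbitrary morphism $g\colon F(A)\to F(B)$ with $A,B\in S$. By full faithfulness there is a unique $f\colon A\to B$ in $\mathcal{A}$ with $g=F(f)$. Because $S$ is an epibrick, $f$ is either zero or an epimorphism: if $f=0$ then $g=F(0)=0$, while if $f$ is epimorphic then $g=F(f)$ is epimorphic as $F$ preserves epimorphisms. Thus every morphism between elements of $F(S)$ is either zero or epimorphic, so $F(S)\in\ebrick\mathcal{B}$. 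The monobrick statement then follows by the dual argument, replacing ``epimorphism'' by ``monomorphism'' throughout and invoking instead the hypothesis that $F$ preserves monomorphisms.

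The argument is short and presents no genuine obstacle; the only two points demanding care are the lifting of an arbitrary morphism $g$ in $\mathcal{B}$ to a morphism $f$ in $\mathcal{A}$, which is precisely the fullness of $F$ applied to the $\Hom$-sets $\Hom_{\mathcal{B}}(F(A),F(B))\cong\Hom_{\mathcal{A}}(A,B)$, and the preservation of the brick property, which rests on the induced isomorphism of endomorphism rings. Everything else is a direct case distinction on whether the lifted morphism is zero or not.
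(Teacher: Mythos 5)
Your proof is correct and follows essentially the same route as the paper: lift each morphism between $F(C_1)$ and $F(C_2)$ through the bijection $\Hom_{\mathcal{A}}(C_1,C_2)\cong\Hom_{\mathcal{B}}(F(C_1),F(C_2))$ given by full faithfulness, then argue by cases (zero, or epimorphic and hence mapped to an epimorphism), with the monobrick claim handled dually. The only cosmetic difference is that you verify the brick-preservation step directly via the induced isomorphism of endomorphism rings, where the paper cites \cite[Lemma 2.1]{Z19} for that same fact.
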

	\begin{proof}
Assume that $F$ is fully faithful and $F$ preserves epimorphisms,
and let $\mathcal{C}\in \ebrick \mathcal{A}$.
It follows from \cite[Lemma 2.1]{Z19} that $F(\mathcal{C})\subset \brick \mathcal{B}$.
Further, for any $h\in{\Hom _{\mathcal{B}}(F(C_1),F(C_2))}$ with $C_1,C_2 \ {\in\mathcal{C}}$, there is some
$h_1\in{\Hom_{\mathcal{A}}(C_1,C_2)}$ such that $h=F(h_1)$.
Therefore, we get $h_1\neq 0$ for any $h\neq 0$, and then $h_1$
is an epimorphism, for the reason that $\mathcal{C}$ is an epibrick.
Since $F$ preserves epimorphisms, we infer that $h$
is an epimorphism and thus $F(\mathcal{C})\in \ebrick \mathcal{B}$.
The statement on monobricks can be proved dually.	
	\end{proof}

Applying Lemma~\ref{4.1} to recollement, we get the following theorem,
which is an analogy of
\cite[Proposition 2.1]{Z19}.
		
	\begin{theorem}\label{4.3} Let $(  \mathcal{A^{'}},\mathcal{A},\mathcal{A^{''}},i^\ast,i_\ast,i^!,j_!,j^\ast,j_\ast)$ be a
recollement of abelian categories.
Then we have
	
{\rm (1)} $i_*(\ebrick \mathcal{A}')\subset \ebrick \mathcal{A}$ and
$i_*(\mbrick \mathcal{A}')\subset \mbrick \mathcal{A}$;

{\rm (2)} $j_{!*}(\ebrick \mathcal{A}')\subset \ebrick \mathcal{A}$ and
$j_{!*}(\mbrick \mathcal{A}')\subset \mbrick \mathcal{A}$;
	
	 {\rm (3)} $j_{*}(\mbrick \mathcal{A}'')\subset \mbrick \mathcal{A}$;
	
	 {\rm (4)} $j_{!}(\ebrick \mathcal{A}'')\subset \ebrick \mathcal{A}$.
	\end{theorem}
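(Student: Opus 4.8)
The plan is to reduce all four assertions to Lemma~\ref{4.1}, which says that a fully faithful functor preserving epimorphisms sends epibricks to epibricks, and a fully faithful functor preserving monomorphisms sends monobricks to monobricks. So for each functor appearing in the statement I only need to verify that it is fully faithful and then record which of monomorphisms/epimorphisms it preserves; all of these facts are already available from the recollement axioms together with Remark~\ref{2.4} and Lemma~\ref{lem-int-func}. In other words, the theorem should require no new computation, only the correct matching of each functor to the appropriate half of Lemma~\ref{4.1}.

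For part~(1) I would observe that $i_*$ is fully faithful by the definition of a recollement and exact by Remark~\ref{2.4}~(1). Since an exact functor preserves both monomorphisms and epimorphisms, Lemma~\ref{4.1} applies at once and yields both the epibrick and the monobrick conclusions simultaneously. Part~(2) is then entirely analogous: Lemma~\ref{lem-int-func}~(1) gives that $j_{!*}$ is fully faithful, and Lemma~\ref{lem-int-func}~(2) gives that $j_{!*}$ preserves both monomorphisms and epimorphisms, so Lemma~\ref{4.1} again produces both statements in one step.

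Parts~(3) and~(4) are the one-sided assertions, and the argument should mirror that one-sidedness. Both $j_*$ and $j_!$ are fully faithful by the recollement axioms, while Remark~\ref{2.4}~(1) records that $j_*$ is only left exact and $j_!$ only right exact. A left exact functor preserves monomorphisms, so Lemma~\ref{4.1} gives $j_*(\mbrick\mathcal{A}'')\subset\mbrick\mathcal{A}$, proving~(3); dually, a right exact functor preserves epimorphisms, so Lemma~\ref{4.1} gives $j_!(\ebrick\mathcal{A}'')\subset\ebrick\mathcal{A}$, proving~(4). I do not expect any genuine obstacle beyond this bookkeeping: the only subtlety is that one must not ask $j_*$ to preserve epimorphisms nor $j_!$ to preserve monomorphisms, which is precisely the reason~(3) is stated only for monobricks and~(4) only for epibricks.
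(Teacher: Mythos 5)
Your proposal is correct and follows essentially the same route as the paper: both reduce all four parts to Lemma~\ref{4.1}, citing the recollement axioms and Lemma~\ref{lem-int-func}~(1) for full faithfulness, and Remark~\ref{2.4}~(1) (exactness of $i_*$, left exactness of $j_*$, right exactness of $j_!$) together with Lemma~\ref{lem-int-func}~(2) for the preservation of monomorphisms and epimorphisms. Your closing remark about why~(3) is stated only for monobricks and~(4) only for epibricks correctly identifies the one genuine point of care, which the paper handles implicitly in the same way.
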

\begin{proof}
By Remark~\ref{2.4} (1) and Lemma~\ref{lem-int-func} (2), we have that both $i_*$
and $j_{!*}$ preserve epimorphisms and monomorphisms,
$j_*$ preserves monomorphisms and $j_!$ preserves epimorphisms. Further, it follows from
the definition of recollement and Lemma~\ref{lem-int-func} (1) that $i_*, j_{!*}, j_{*}$
and $j_{!}$ are fully faithful. Now this theorem follows from Lemma~\ref{4.1} immediately.
\end{proof}		

 Motivated by gluing simples in \cite{BBD82} and semibricks in \cite{Z19}, here we also give a
 construction of monobricks (resp. epibricks) from the left and right side into the middle.

 \begin{theorem}\label{4.4}
Let $(  \mathcal{A^{'}},\mathcal{A},\mathcal{A^{''}},i^\ast,i_\ast,i^!,j_!,j^\ast,j_\ast)$ be a recollement of abelian categories.
Then we have $i_*(\mbrick \mathcal{A}')\cup j_{!*}(\mbrick \mathcal{A}'')\subset \mbrick \mathcal{A}$
and $i_*(\ebrick \mathcal{A}')\cup j_{!*}(\ebrick \mathcal{A}'')\subset \ebrick \mathcal{A}$.	
	\end{theorem}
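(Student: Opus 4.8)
The plan is to reduce the statement to a vanishing computation for the ``cross'' Hom-spaces between the two families. First I would fix a monobrick (resp. epibrick) $\mathcal{C}'$ in $\mathcal{A}'$ and a monobrick (resp. epibrick) $\mathcal{C}''$ in $\mathcal{A}''$, and set $S := i_*(\mathcal{C}') \cup j_{!*}(\mathcal{C}'')$. By Theorem~\ref{4.3} (1) and (2) each of $i_*(\mathcal{C}')$ and $j_{!*}(\mathcal{C}'')$ is already a monobrick (resp. epibrick) in $\mathcal{A}$; in particular $S \subseteq \brick \mathcal{A}$, every morphism inside $i_*(\mathcal{C}')$ is zero or a monomorphism (resp. epimorphism), and likewise inside $j_{!*}(\mathcal{C}'')$. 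Thus the only thing left to verify is the behaviour of morphisms that pass between the two families.

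The key claim is that these cross-morphisms all vanish: for every $X \in \mathcal{C}'$ and $Y \in \mathcal{C}''$ we have $\Hom_{\mathcal{A}}(i_*(X), j_{!*}(Y)) = 0$ and $\Hom_{\mathcal{A}}(j_{!*}(Y), i_*(X)) = 0$. Throughout I would use that $j^* i_* = 0$, which is immediate from the recollement identity $\Ker j^* = \Im i_*$. For the first vanishing, recall that $j_{!*}(Y)$ is by definition the image of $\gamma_Y : j_!(Y) \to j_*(Y)$, so the canonical inclusion $\iota : j_{!*}(Y) \hookrightarrow j_*(Y)$ is a monomorphism. Given $h : i_*(X) \to j_{!*}(Y)$, the adjunction $(j^*, j_*)$ gives $\Hom_{\mathcal{A}}(i_*(X), j_*(Y)) \cong \Hom_{\mathcal{A}''}(j^* i_*(X), Y) = 0$, so $\iota h = 0$; as $\iota$ is monic this forces $h = 0$. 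Dually, for $h : j_{!*}(Y) \to i_*(X)$ I would use that $j_{!*}(Y)$ is a quotient of $j_!(Y)$, i.e. the coimage map $\pi : j_!(Y) \twoheadrightarrow j_{!*}(Y)$ is an epimorphism; the adjunction $(j_!, j^*)$ gives $\Hom_{\mathcal{A}}(j_!(Y), i_*(X)) \cong \Hom_{\mathcal{A}''}(Y, j^* i_*(X)) = 0$, so $h \pi = 0$ and hence $h = 0$.

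With the cross-morphisms shown to be zero, the conclusion is immediate: the zero morphism is vacuously ``zero or monic'' as well as ``zero or epic'', so every morphism between two elements of $S$ is zero or monic (resp. zero or epic). Hence $S \in \mbrick \mathcal{A}$ (resp. $S \in \ebrick \mathcal{A}$), which is exactly the asserted inclusion. I do not expect any serious obstacle: the only substantive input is the vanishing of the two cross-Hom spaces, which is a formal consequence of $j^* i_* = 0$ combined with the sub/quotient factorisation of $j_{!*}$ through $j_!$ and $j_*$. The one point to handle carefully is to invoke the correct one-sided factorisation of $j_{!*}(Y)$ in each case---as a subobject of $j_*(Y)$ for maps \emph{into} it, and as a quotient of $j_!(Y)$ for maps \emph{out of} it---so that the monic, resp. epic, comparison map can be cancelled.
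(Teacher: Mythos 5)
Your proof is correct, and its overall architecture coincides with the paper's: both invoke Theorem~\ref{4.3} to handle morphisms within each family, and then reduce everything to the vanishing of the cross Hom-spaces $\Hom_{\mathcal{A}}(i_*(X), j_{!*}(Y))$ and $\Hom_{\mathcal{A}}(j_{!*}(Y), i_*(X))$. Where you genuinely differ is in how that vanishing is established. The paper quotes Lemma~\ref{lem-int-func}(3), namely $i^*j_{!*}=0$ and $i^!j_{!*}=0$ (a fact imported from the literature), and concludes via the adjunctions $(i_*,i^!)$ and $(i^*,i_*)$. You instead argue entirely on the $j$-side: you use the image factorisation $j_!(Y)\twoheadrightarrow j_{!*}(Y)\hookrightarrow j_*(Y)$ of $\gamma_Y$, the identity $j^*i_*=0$ (immediate from $\Ker j^*=\Im i_*$), and the adjunctions $(j^*,j_*)$ and $(j_!,j^*)$, then cancel the monomorphism into $j_*(Y)$ for maps into $j_{!*}(Y)$ and the epimorphism from $j_!(Y)$ for maps out of it. Your version is self-contained where the paper's relies on a citation; indeed your computation is essentially the standard proof of the cited lemma, since $i^!j_*=0$ and $i^*j_!=0$ follow from $j^*i_*=0$ by the same adjunction argument, after which left exactness of $i^!$ (resp.\ right exactness of $i^*$) kills the image of $\gamma_Y$. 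The two routes establish the same vanishing; yours trades one external reference for a few lines of factorisation-and-cancellation, and you correctly flag the one delicate point, namely using the subobject presentation of $j_{!*}(Y)$ for incoming maps and the quotient presentation for outgoing ones.
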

	\begin{proof}
Let $\mathcal{C}'\in \mbrick \mathcal{A}'$ and $\mathcal{C}''\in \mbrick \mathcal{A}''$. By Theorem~\ref{4.3},
we have $i_*(\mathcal{C}')\in \mbrick \mathcal{A}$ and $j_{!*}(\mathcal{C}'')\in \mbrick \mathcal{A}$.
To prove that $i_*(\mathcal{C}')\cup j_{!*}(\mathcal{C}'')\in \mbrick \mathcal{A}$,
it suffices to show every non-zero map from $i_*(\mathcal{C}')$ to $j_{!*}(\mathcal{C}'')$
or from $j_{!*}(\mathcal{C}'')$ to $i_*(\mathcal{C}')$ is either zero or
monomorphic. Indeed, it follows from Lemma~\ref{lem-int-func} (3) that $i^*j_{!*}=0$ and
$i^!j_{!*}=0$, and then $\Hom _{\mathcal{A}}(i_*(\mathcal{C}'), j_{!*}(\mathcal{C}''))\cong
\Hom _{\mathcal{A}'}(\mathcal{C}', i^!j_{!*}(\mathcal{C}''))\cong 0$ and
$\Hom _{\mathcal{A}}(j_{!*}(\mathcal{C}''), i_*(\mathcal{C}'))\cong
\Hom _{\mathcal{A}'}( i^*j_{!*}(\mathcal{C}''),\mathcal{C}' )\cong 0$.
Thus $i_*(\mathcal{C}')\cup j_{!*}(\mathcal{C}'')\in \mbrick \mathcal{A}$, and the statement on epibricks
can be proved dually.
	\end{proof}

Since $i^!j_!$ and $i^*j_*$ may not be zero, $i_*(\mbrick \mathcal{A}')\cup j_{!}(\mbrick \mathcal{A}'')$
and $i_*(\mbrick \mathcal{A}')\cup j_{*}(\mbrick \mathcal{A}'')$ will not usually
 be monobricks in the middle category. But we have the following special case due to
 \cite[Proposition 8.8]{FP04}.

	\begin{theorem}\label{4.5}
If $i^*$ is exact, then we have $i_*(\mbrick \mathcal{A}')\cup j_{!}(\mbrick \mathcal{A}'')\subset \mbrick \mathcal{A}$
and $i_*(\ebrick \mathcal{A}')\cup j_{!}(\ebrick \mathcal{A}'')\subset \ebrick \mathcal{A}$.
Dually, if $i^!$ is exact, then we have $i_*(\mbrick \mathcal{A}')\cup j_{*}(\mbrick \mathcal{A}'')\subset \mbrick \mathcal{A}$
and $i_*(\ebrick \mathcal{A}')\cup j_{*}(\ebrick \mathcal{A}'')\subset \ebrick \mathcal{A}$.	
	\end{theorem}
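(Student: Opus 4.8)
The plan is to check the defining conditions of a monobrick (resp.\ epibrick) directly for the union $i_*(\mathcal{C}')\cup j_!(\mathcal{C}'')$, where $\mathcal{C}'\in\mbrick\mathcal{A}'$ and $\mathcal{C}''\in\mbrick\mathcal{A}''$ (and likewise in the epibrick case). Three things must be verified: that every object is a brick, that maps inside each of the two families are zero or monomorphic (resp.\ epimorphic), and that maps between the two families are as well. The within-family part follows from Theorem~\ref{4.3}: $i_*(\mathcal{C}')$ is already a monobrick (resp.\ epibrick) by part (1). For $j_!(\mathcal{C}'')$ in the epibrick case I can quote part (4) verbatim, since $j_!$ always preserves epimorphisms; in the monobrick case I would first note that $i^*$ exact forces $j_!$ to be exact by \cite[Proposition 8.8]{FP04}, so that $j_!$ preserves monomorphisms, and then Lemma~\ref{4.1} gives $j_!(\mathcal{C}'')\in\mbrick\mathcal{A}$. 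Thus everything reduces to the two cross directions.

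One cross direction is automatic. For $C'\in\mathcal{C}'$ and $C''\in\mathcal{C}''$, the adjunction $(j_!,j^*)$ together with $j^*i_*=0$ (a restatement of $\Ker j^*=\Im i_*$) gives
$$\Hom_\mathcal{A}(j_!(C''),i_*(C'))\cong\Hom_{\mathcal{A}''}(C'',j^*i_*(C'))=0,$$
so these maps are all zero.

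The other direction is the crux, and it is exactly where exactness of $i^*$ enters. The naive adjunction computation does not help: $(i_*,i^!)$ identifies $\Hom_\mathcal{A}(i_*(C'),j_!(C''))$ with $\Hom_{\mathcal{A}'}(C',i^!j_!(C''))$, and $i^!j_!$ may be nonzero, as observed just before the theorem. Instead I would argue on images. Given any $g\colon i_*(C')\to j_!(C'')$, the vanishing $i^*j_!=0$ (which follows from $j^*i_*=0$ by adjunction) makes the target of $i^*(g)$ the zero object, so $i^*(g)=0$; since $i^*$ is exact, Lemma~\ref{3.3}(3) yields $i^*(\Im g)\cong\Im(i^*(g))=0$. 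Applying the exact functor $j^*$ and using $j^*i_*=0$ gives $j^*(\Im g)=0$ in the same way. Hence $\Im g$ lies in $\Ker i^*\cap\Ker j^*$, which is zero: any $X\in\Ker j^*=\Im i_*$ equals $i_*(X')$, so $i^*(X)\cong X'$, and $X\in\Ker i^*$ forces $X'=0$, i.e.\ $X=0$. Therefore $g=0$, both cross directions vanish, and the union is a monobrick (resp.\ epibrick).

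For the dual statement under $i^!$ exact I would run the mirror argument with $i^*,j_!$ replaced by $i^!,j_*$ and monomorphisms interchanged with epimorphisms. Now $\Hom_\mathcal{A}(i_*(C'),j_*(C''))\cong\Hom_{\mathcal{A}''}(j^*i_*(C'),C'')=0$ is the free direction, while for $g\colon j_*(C'')\to i_*(C')$ one uses $i^!j_*=0$ and exactness of $i^!$ to get $i^!(\Im g)=0$, and $j^*i_*=0$ to get $j^*(\Im g)=0$, so that $\Im g\in\Ker i^!\cap\Ker j^*=0$; here $j_*$ exact (again \cite[Proposition 8.8]{FP04}) is invoked only for the epibrick case, since $j_*$ preserves monomorphisms automatically. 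The step I expect to demand the most care is this second cross direction: because $i^!j_!$ (resp.\ $i^*j_*$) can genuinely be nonzero, the Hom space does not vanish for formal reasons, and it is precisely the exactness hypothesis that lets the image argument trap $\Im g$ inside the trivial intersection $\Ker i^*\cap\Ker j^*$.
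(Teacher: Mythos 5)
Your proof is correct, but at the decisive step it takes a genuinely different route from the paper. The paper handles the cross direction $i_*(\mathcal{C}')\to j_!(\mathcal{C}'')$ purely formally: by \cite[Proposition 8.8]{FP04}, exactness of $i^*$ forces $i^!j_!=0$, whence $\Hom_{\mathcal{A}}(i_*(C'),j_!(C''))\cong \Hom_{\mathcal{A}'}(C',i^!j_!(C''))=0$ by the adjunction $(i_*,i^!)$ --- so your side remark that ``the naive adjunction computation does not help'' is actually mistaken: under the hypothesis $i^*$ exact, $i^!j_!$ \emph{does} vanish (the remark before the theorem about $i^!j_!\neq 0$ concerns the general, unhypothesized situation), and indeed the very proposition you cite for exactness of $j_!$ is what the paper invokes for this vanishing. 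Your alternative is nonetheless valid and self-contained: given $g\colon i_*(C')\to j_!(C'')$, the always-true vanishing $i^*j_!=0$ gives $i^*(g)=0$, exactness of $i^*$ and Lemma~\ref{3.3} give $i^*(\Im g)=0$, exactness of $j^*$ together with $j^*i_*=0$ gives $j^*(\Im g)=0$, and $\Ker i^*\cap \Ker j^*=0$ traps $\Im g=0$, so $g=0$; since this holds for all objects, your argument in effect \emph{reproves} the consequence $i^!j_!=0$ of \cite[Proposition 8.8]{FP04} rather than citing it, at the cost of a slightly longer computation but with the gain of not depending on that proposition for the cross Hom (only for exactness of $j_!$ in the monobrick case, where the paper instead cites \cite[Lemma 3.2]{FZ17}). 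Everything else --- the reduction via Lemma~\ref{4.1} and Theorem~\ref{4.3}, the free direction $\Hom_{\mathcal{A}}(j_!(C''),i_*(C'))\cong\Hom_{\mathcal{A}''}(C'',j^*i_*(C'))=0$, and the dual statement with $i^!$, $j_*$, where you correctly note that exactness of $j_*$ is needed only for epibricks --- matches the paper's proof.
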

	\begin{proof}
We prove the first assertion, and the second can be proved dually.
Assume that $i^*$ is exact. Then it follows from \cite[Lemma 3.2]{FZ17} that
$j_!$ is exact, and thus it preserves monomorphisms. Applying Lemma~\ref{4.1},
we have that $j_{!}(\mbrick \mathcal{A}'')\subset \mbrick \mathcal{A}$.
Therefore,
to show $i_*(\mbrick \mathcal{A}')\cup j_{!}(\mbrick \mathcal{A}'')\subset \mbrick \mathcal{A}$,
it suffices to show $\Hom _{\mathcal{A}}(\Im i_*, \Im j_{!})\cong 0$
and $\Hom _{\mathcal{A}}(\Im j_{!}, \Im i_*)\cong 0$, and then the proof
is finished by using the same argument as Theorem~\ref{4.4}.
Since $i^*$ is exact, we get
$i^!j_!=0$ by \cite[Proposition 8.8]{FP04},
and then $\Hom _{\mathcal{A}}(\Im i_*, \Im j_{!})\cong 0$ by adjointness.
Further, it follows from $j^*i_*=0$ that
$\Hom _{\mathcal{A}}(\Im j_{!}, \Im i_*)\cong 0$.
Thus we conclude that $i_*(\mbrick \mathcal{A}')\cup j_{!}(\mbrick \mathcal{A}'')\subset \mbrick \mathcal{A}$, and the statement on epibricks can be
proved similarly.	
	\end{proof}

	\section{Applications and examples}\label{5}
	\indent\indent
	Throughout this section, all algebras are basic, connected, finite dimensional $k$-algebras
over a field $k$.
Let $A$ be such an algebra, and let $\mod A$ be the category of finitely generated left $A$-modules.
We denote by $\add M$ the subcategory consisting of direct summands of finite direct sums of $M$ for $M\in{\mod A}$.
In this section, we will apply our main results to triangular matrix algebras, one-point extension algebras
and Morita rings.
	
Let $A$ and $B$ be two algebras, $_{A}M_B$ an $A$-$B$-bimodule, and $\varLambda=\left( \begin{array} {cc}
	A	&M  \\
0		& B
	\end{array}\right) $. Let $G=M\otimes_{B}-$. A left $\Lambda$-module is identified with a triple $\left(\begin{array}{c}
		X	\\Y
	\end{array} \right)_{\phi}$, where $X\in{\mod A},Y\in{\mod B}$, and $\phi:M\otimes_{B}Y\rightarrow{X}$ is a morphism of $A$-modules.
By \cite[Theorem 3.1]{Lu17}
or \cite[Example 2.12]{Psa14},
we have the following recollement of abelian categories:
$$\xymatrix@!=4pc{ \mod A \ar[r]|{i_{\ast}} & \mod\Lambda \ar@<-3ex>[l]_{i^{\ast}}
	\ar@<+3ex>[l]^{i^!} \ar[r]|{j^{\ast}} & \mod B
	\ar@<-3ex>[l]_{j_!} \ar@<+3ex>[l]^{j_{\ast}}}  ,$$
	where $i^{\ast}$ is given by $\left(\begin{array}{c}
		X	\\Y
	\end{array} \right)_{\phi}\mapsto{\Coker(\phi)}$; $i_{\ast}$ is given by $X\mapsto{\left(\begin{array}{c}
		X	\\0
	\end{array} \right)_{0}}$; $i^{!}$ is given by $\left(\begin{array}{c}
	X	\\Y
\end{array} \right)_{\phi}\mapsto{X}$; $j_!$ is given by $Y\mapsto{\left(\begin{array}{c}
	G(Y)	\\Y
\end{array} \right)_{\id}}$; $j^{\ast}$ is given by $\left(\begin{array}{c}
X	\\Y
\end{array} \right)_{\phi}\mapsto{Y}$; $j_{\ast}$ is given by $Y\mapsto{\left(\begin{array}{c}
0	\\Y
\end{array} \right)_{0}}$.

It is clear that both $i^!$ and $j_*$ are exact, and then
Theorem~\ref{3.4}, ~\ref{3.5}, ~\ref{3.7} (2), ~\ref{3.10}, ~\ref{thm-bij}, ~\ref{thm-recoll},
~\ref{cor-tor}, ~\ref{4.3} and ~\ref{4.5} apply. That is, we can construct ICE-closed subcategories, epibricks
and monobricks
of a triangular matrix algebra by its corner algebras.
In particular, let $\varLambda=\left( \begin{array} {cc}
	A	&M  \\
0		& k
	\end{array}\right) $ be the one-point extension algebra of $A$ by
an $A$-$k$-bimodule $_{A}M_k$.
Then all
left $\Lambda$-modules can be seen as $\left(\begin{array}{c}
		X	\\k^n
	\end{array} \right)_{\phi}$, where $X\in{\mod A},n\in \mathbb{N}$, and $\phi:M\otimes_{k}k^n\rightarrow{X}$ is a morphism of $A$-modules.
In this case, we recover the main theorem of \cite{LG24}.

\begin{corollary}\label{cor-one-exten}{\rm (\cite[Theorem 1.1 and 1.2]{LG24})}
Let $A$ be an algebra, $_{A}M_k$ an $A$-$k$-bimodule and $\varLambda=\left( \begin{array} {cc}
	A	&M  \\
0		& k
	\end{array}\right).$
Let $\mathcal{C}_A$ be an ICE-closed subcategory in $\mod A$, and $\mathcal{S}_A$ an epibrick in $\mod A$.
Then

{\rm (1)} $\left\lbrace \left(\begin{array}{c}
	_{A}X	\\0
\end{array} \right)_{0}|\ _AX\in{\mathcal{C}_A}\right\rbrace $  is an ICE-closed subcategory in $\mod \varLambda$.
	
{\rm (2)} $\left\lbrace \left( \begin{array}{c}
	_{A}X	\\k^n
\end{array} \right)_{f}|\ _AX\in{\mathcal{C}_A}, n\in \mathbb{N} \right\rbrace $ is an ICE-closed subcategory in $\mod \varLambda$.

{\rm (3)} $\left\lbrace{\left(\begin{array}{c}
		_AX	\\0
	\end{array} \right)_{0}|\ _AX\in{\mathcal{S}_A} } \right\rbrace $ is an epibrick subcategory in $\mod \varLambda$.
	
{\rm (4)} $\left\lbrace{\left(\begin{array}{c}
		_AX	\\0
	\end{array} \right)_{0},   \left(\begin{array}{c}
		0	\\k
	\end{array}\right)_{0}|\ _AX\in{\mathcal{S}_A} } \right\rbrace $ is an epibrick subcategory in $\mod \varLambda$.
\end{corollary}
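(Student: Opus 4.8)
The plan is to read off each of the four subcategories as the image, or preimage, of a familiar subcategory under one of the recollement functors, and then to quote the corresponding general result from Sections~\ref{Recollements of ICE-closed subcategories} and~\ref{4}. The one structural feature I would use repeatedly is that, for this triangular matrix recollement, both $i^!$ and $j_\ast$ are exact; this unlocks exactly the exactness-dependent statements needed below.

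For (1), since $i_\ast(X)=\left(\begin{array}{c} X \\ 0 \end{array}\right)_0$, the displayed family is literally $i_\ast(\mathcal{C}_A)$, so Theorem~\ref{3.4} applies verbatim. For (2), I would use the explicit formula $i^!\left(\begin{array}{c} X \\ Y \end{array}\right)_\phi = X$ together with the fact that every object of $\mod \varLambda$ has second component $Y=k^n$ for some $n$ and arbitrary structure map; hence the displayed family coincides with $\{M\in\mod \varLambda \mid i^!(M)\in\mathcal{C}_A\}$. As $i^!$ is exact, Corollary~\ref{3.10}~(3) with $\mathcal{W}=\mathcal{C}_A$ shows this set is ICE-closed.

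For (3), the functor $i_\ast$ is fully faithful and exact, hence preserves epimorphisms, so $i_\ast(\mathcal{S}_A)=\{\left(\begin{array}{c} X \\ 0 \end{array}\right)_0 \mid X\in\mathcal{S}_A\}$ is an epibrick by Theorem~\ref{4.3}~(1). For (4), I would glue $\mathcal{S}_A$ with an epibrick on the $B=k$ side. The natural choice is the one-object set $\{k\}$, which lies in $\ebrick (\mod k)$ because $\End_k(k)=k$ is a division ring and its only nonzero endomorphisms are isomorphisms. Reading off $j_\ast(k)=\left(\begin{array}{c} 0 \\ k \end{array}\right)_0$ directly from the definition of $j_\ast$ and invoking the $i^!$-exact branch of Theorem~\ref{4.5}, I obtain $i_\ast(\mathcal{S}_A)\cup j_\ast(\{k\})\in\ebrick (\mod \varLambda)$, which is precisely the displayed set.

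No genuine difficulty arises here; the work is entirely the bookkeeping of matching the explicit functor formulas against the subcategories in the statement, together with the single exactness check that $i^!$ is exact. The only step deserving care is (4): one must confirm that $\{k\}$ really belongs to $\ebrick (\mod k)$ and that the glued union is exactly the two-type family displayed, rather than some larger subcategory.
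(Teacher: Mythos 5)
Your proposal is correct and follows exactly the paper's own route: the paper proves this corollary by citing Theorem~\ref{3.4} for (1), Corollary~\ref{3.10}~(3) with the exact functor $i^!$ for (2), Theorem~\ref{4.3}~(1) for (3), and the $i^!$-exact branch of Theorem~\ref{4.5} with the epibrick $\{k\}$ in $\mod k$ for (4). Your write-up merely makes explicit the bookkeeping (the formulas for $i_\ast$, $i^!$, $j_\ast$ and the identification of each displayed family with an image or preimage) that the paper leaves implicit, and all of these identifications check out.
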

\begin{proof}
This follows from Theorem~\ref{3.4}, Corollary~\ref{3.10} (3), Theorem~\ref{4.3} (1)
and Theorem~\ref{4.5}.
\end{proof}

Now we give an easy example to illustrate our result.

\begin{example}{\rm
Let $\Lambda:=kQ_{\Lambda}$ be a path algebra with
$Q_{\Lambda}:$ $\xymatrix{ 4\ar[r]^{\alpha} & 1\ar[r]^{\beta}&2\ar[r]^{\gamma}&3}$.
Let $A:=kQ_{A}$ with $Q_{A}:$$\xymatrix{ 4\ar[r]^{\alpha} & 1}$, and $B:=kQ_{B}$ with $Q_{B}:$$\xymatrix{2\ar[r]^{\gamma}&3}$.
Then $\Lambda =\left({\begin{array}{cc}
			A& _{A}M_B \\
			0& B
	\end{array}} \right) $ with $M=e_A\Lambda e_B$. The irreducible representations of $A$ are $1,4,\begin{array}{cc}
	4 \\
	1
\end{array}$, and the ICE-closed subcategories in $\mod A$ are $$\add\left\lbrace {0}\right\rbrace, \add\left\lbrace {1}\right\rbrace,\add\left\lbrace {4}\right\rbrace, \add\left\lbrace {\begin{array}{cc}
	4 \\
	1
	\end{array} }\right\rbrace, \add\left\lbrace {4,1,\begin{array}{cc}
	4 \\
	1
\end{array} }\right\rbrace,\add\left\lbrace {4,\begin{array}{cc}
4 \\
1
\end{array} }\right\rbrace.$$	
The irreducible representations of $B$ are $2,3,\begin{array}{cc}
	2 \\
	3
\end{array}$, and the ICE-closed subcategories in $\mod B$ are  $$\add\left\lbrace {0}\right\rbrace, \add\left\lbrace {2}\right\rbrace,\add\left\lbrace {3}\right\rbrace, \add\left\lbrace {\begin{array}{cc}
	2 \\
	3
	\end{array} }\right\rbrace,\add\left\lbrace {2,3,\begin{array}{cc}
	2 \\
	3
\end{array} }\right\rbrace,\\
\add\left\lbrace {2,\begin{array}{cc}
2 \\
3
\end{array} }\right\rbrace.$$

(1) By theorem~\ref{3.4}, we construct ICE-closed
 subcategories of $\Lambda$ by those of $A$:		
$$\begin{tabular}{|c|c|}
	\hline
$\mod A_{ice}$	& $\mod\Lambda_{ice}$ \\
	\hline
$\add\left\lbrace {0}\right\rbrace$	&$\add\left\lbrace {0}\right\rbrace$  \\
	\hline
$\add\left\lbrace {1}\right\rbrace$	&$\add\left\lbrace {1}\right\rbrace$  \\
	\hline
$\add\left\lbrace {4}\right\rbrace$	& $\add\left\lbrace {4}\right\rbrace$ \\
	\hline
$\add\left\lbrace {\begin{array}{cc}
		4 \\
		1
\end{array} }\right\rbrace$	&$\add\left\lbrace {\begin{array}{cc}
4 \\
1
\end{array} }\right\rbrace$  \\
	\hline
$\add\left\lbrace {4,1,\begin{array}{cc}
		4 \\
		1
\end{array} }\right\rbrace$	& $\add\left\lbrace {4,1,\begin{array}{cc}
4 \\
1
\end{array} }\right\rbrace$ \\
	\hline
$\add\left\lbrace {4,\begin{array}{cc}
		4 \\
		1
\end{array} }\right\rbrace $	&$\add\left\lbrace {4,\begin{array}{cc}
4 \\
1
\end{array} }\right\rbrace $  \\
	\hline
\end{tabular}$$	
		
(2) By Corollary~\ref{3.10} (3), we have the following correspondence:
\begin{center}
\begin{longtable}{|c|c|}
	\hline
$\mod A_{ice}$	& $\mod\Lambda_{ice}$ \\
	\hline\endfirsthead
	\hline
	$\mod A_{ice}$	& $\mod\Lambda_{ice}$ \\
	\hline\endhead
$\add\left\lbrace {0}\right\rbrace$	& $\add\left\lbrace {2,3,\begin{array}{cc}
		2\\
		3
\end{array}}\right\rbrace$ \\
	\hline
$\add\left\lbrace {1}\right\rbrace$	& $\add\left\lbrace {2,3,\begin{array}{cc}
		2\\
		3
	\end{array},1,\begin{array}{cc}
1 \\
2
\end{array},\begin{array}{cc}
1 \\
2\\
3
\end{array}}\right\rbrace$  \\
	\hline
$\add\left\lbrace {4}\right\rbrace$	& $\add\left\lbrace {2,3,4,\begin{array}{cc}
		2\\
		3
	\end{array}}\right\rbrace$ \\
	\hline
$\add\left\lbrace {\begin{array}{cc}
		4 \\
		1
\end{array} }\right\rbrace $	& $\add\left\lbrace {2,3,\begin{array}{cc}
2\\
3
\end{array},\begin{array}{cc}
4\\
1
\end{array},\begin{array}{cc}
4\\
1\\
2
\end{array},\begin{array}{cc}
4\\
1\\
2\\
3
\end{array}}\right\rbrace$ \\
	\hline
$\add\left\lbrace {4,\begin{array}{cc}
		4 \\
		1
\end{array} }\right\rbrace $	& $\add\left\lbrace {2,3,4,\begin{array}{cc}
2\\
3
\end{array},\begin{array}{cc}
4\\
1
\end{array},\begin{array}{cc}
4\\
1\\
2
\end{array},\begin{array}{cc}
4\\
1\\
2\\
3
\end{array}}\right\rbrace$  \\
	\hline
$\add\left\lbrace {4,1,\begin{array}{cc}
		4 \\
		1
\end{array} }\right\rbrace$		& $\add\left\lbrace {1,2,3,4,\begin{array}{cc}
2\\
3
\end{array},\begin{array}{cc}
4\\
1
\end{array},\begin{array}{cc}
4\\
1\\
2
\end{array},\begin{array}{cc}
4\\
1\\
2\\
3
\end{array},\begin{array}{cc}
1 \\
2
\end{array},\begin{array}{cc}
1 \\
2\\
3
\end{array}}\right\rbrace$  \\
	\hline
	
\end{longtable}	
\end{center}	
(3) By theorem~\ref{3.5}, we construct ICE-closed
 subcategories of $\Lambda$ by those of $B$:		
\begin{center}		
		\begin{longtable}{|c|c|}
			\hline
	$	\mod B_{ice}$	&$\mod\Lambda_{ice}$  \\
			\hline\endfirsthead
			\hline
			$	\mod B_{ice}$	&$\mod\Lambda_{ice}$  \\
			\hline\endhead
		$\add\left\lbrace {0}\right\rbrace$&$\add\left\lbrace {0}\right\rbrace$\\
		\hline
			$\add\left\lbrace {2}\right\rbrace$&$\add\left\lbrace {2}\right\rbrace$  \\
			\hline
			$\add\left\lbrace {3}\right\rbrace$&$\add\left\lbrace {3}\right\rbrace$  \\
			
			\hline
		$\add\left\lbrace {\begin{array}{cc}
				2 \\
				3
		\end{array} }\right\rbrace$	&$\add\left\lbrace {\begin{array}{cc}
				2 \\
				3
		\end{array}  }\right\rbrace$  \\
		\hline
		$\add\left\lbrace {2,\begin{array}{cc}
				2 \\
				3
		\end{array} }\right\rbrace$	&$\add\left\lbrace {2,\begin{array}{cc}
				2 \\
				3
		\end{array} }\right\rbrace$  \\
		\hline
		
		$\add\left\lbrace {2,3,\begin{array}{cc}
				2 \\
				3
		\end{array} }\right\rbrace$	&$\add\left\lbrace {2,3,\begin{array}{cc}
			2 \\
			3
		\end{array} }\right\rbrace$  \\
	
			\hline
	
		\end{longtable}
		\end{center}
		(4) By corollary~\ref{3.10} (1), we have the following correspondence:		
\begin{center}		
	\begin{longtable}{|c|c|}
		\hline
		$	\mod B_{ice}$	&$\mod\Lambda_{ice}$  \\
		\hline\endfirsthead
		\hline
		$	\mod B_{ice}$	&$\mod\Lambda_{ice}$  \\
		\hline\endhead
		$\add\left\lbrace {0}\right\rbrace$&$\add\left\lbrace {4,1,\begin{array}{cc}
				4 \\
				1
		\end{array}}\right\rbrace$\\
		\hline
		$\add\left\lbrace {2}\right\rbrace$&$\add\left\lbrace {4,1,\begin{array}{cc}
				4 \\
				1
			\end{array},2,\begin{array}{cc}
			4 \\
			1 \\
			2
		\end{array},\begin{array}{cc}
	
		1 \\2
	\end{array}}\right\rbrace$  \\
		\hline
		$\add\left\lbrace {3}\right\rbrace$&$\add\left\lbrace {4,1,\begin{array}{cc}
				4 \\
				1
		\end{array},3}\right\rbrace$  \\
		
		\hline
		$\add\left\lbrace {\begin{array}{cc}
				2 \\
				3
		\end{array}}\right\rbrace$	&$\add\left\lbrace {4,1,\begin{array}{cc}
		4 \\
		1
		\end{array},\begin{array}{cc}
		2\\
		3
	\end{array},\begin{array}{cc}
	1 \\
	2\\3
\end{array},\begin{array}{cc}
4 \\
1 \\2\\3
\end{array} }\right\rbrace$  \\
		\hline
		$\add\left\lbrace {2,\begin{array}{cc}
				2 \\
				3
		\end{array} }\right\rbrace$	&$\add\left\lbrace {4,1,\begin{array}{cc}
		4 \\
		1
		\end{array},\begin{array}{cc}
		4 \\
		1 \\2
		\end{array},\begin{array}{cc}
		1\\
		2
		\end{array},2,\begin{array}{cc}
				2 \\
				3
		\end{array},\begin{array}{cc}
		4 \\
		1 \\2\\3
	\end{array},\begin{array}{cc}
	1 \\
	2 \\3
\end{array} }\right\rbrace$  \\
		\hline
		
		$\add\left\lbrace {2,3,\begin{array}{cc}
				2 \\
				3
		\end{array} }\right\rbrace$	&$\add\left\lbrace {4,1,\begin{array}{cc}
		4 \\
		1
		\end{array},\begin{array}{cc}
		4 \\
		1 \\2
		\end{array},\begin{array}{cc}
		1\\
		2
		\end{array},2,\begin{array}{cc}
		2 \\
		3
		\end{array},\begin{array}{cc}
		4 \\
		1 \\2\\3
		\end{array},\begin{array}{cc}
		1 \\
		2 \\3
	\end{array},3 }\right\rbrace$   \\	
		\hline	
	\end{longtable}
	\end{center}}	
		
		\end{example}
	
	\begin{example}\label{example-2}
		{\rm Let $\Lambda _{(\phi,\psi)}=\left( \begin{array} {cc}
	A	&M  \\
N		& B
	\end{array}\right) $ be a Morita ring as in \cite{ref8},
where $_{A}M_B$ is an $A$-$B$-bimodule, $_{B}N_A$ is a $B$-$A$-bimodule,
$\psi:M\otimes_{B}N\rightarrow{A}$ is a morphism of $A$-$A$-bimodules
and $\phi:N\otimes_{A}M\rightarrow{B}$ is a morphism of $B$-$B$-bimodules.
 By \cite[Proposition 2.4]{ref8},
there are two recollements
				$$\xymatrix@!=8pc{ \mod (A/\Im\psi) \ar[r] |{inc}& \mod {\Lambda_{\left( \phi,\psi\right) } }\ar@<-3ex>[l]
				\ar@<+3ex>[l] \ar[r]|{U_B} \ar[r]& \mod B
				\ar@<-3ex>[l]_{T_B} \ar@<+3ex>[l]^{H_B}} $$
and $$\xymatrix@!=8pc{ \mod (B/\Im\phi) \ar[r] |{inc}& \mod {\Lambda_{\left( \phi,\psi\right) } }\ar@<-3ex>[l]
				\ar@<+3ex>[l] \ar[r]|{U_A} \ar[r]& \mod A
				\ar@<-3ex>[l]_{T_A} \ar@<+3ex>[l]^{H_A}},$$
and then
Theorem~\ref{3.4}, ~\ref{3.10} (1), ~\ref{thm-bij}, ~\ref{thm-recoll},
~\ref{cor-tor} and ~\ref{4.3} apply. That is, we can construct ICE-closed subcategories, epibricks
and monobricks
of a Morita ring by its corner algebras. In particular, if $_BN$ (resp. $_AM$) is projective,
then the functor $H_B$ (resp. $H_A$) is exact and then Theorem~\ref{3.5} and Proposition~\ref{3.7} (2) apply.
If $M_B$ (resp. $N_A$) is projective,
then the functor $T_B$ (resp. $T_A$) is exact and then Theorem~\ref{3.5} apply.
			
		}
	\end{example}

	\bigskip
	
	\noindent {\footnotesize {\bf Acknowledgments} This work is supported by
		the National Natural Science Foundation of China (12061060),
		the project of Young and Middle-aged Academic and Technological leader of Yunnan
		(Grant No. 202305AC160005) and Yunnan Key Laboratory of Modern Analytical Mathematics and Applications (No. 202302AN360007).
		
	}


\begin{thebibliography}{99}


\bibitem{AIR14} T. Adachi, O. Iyama and I. Reiten, tau-tilting theory, Compos. Math. 150 (2014), no. 3, 415--452.

\bibitem{AKL11} L. Angeleri H\"{u}gel, S. Koenig and Q. H. Liu, Recollements and tilting objects, J. Pure
Appl. Algebra. 215 (2011), 420--438.

\bibitem{AS24} L. Angeleri H\"{u}gel and F. Sentieri, Wide coreflective subcategories and torsion pairs,
J. Algebra 664 (2024), 164--205.

\bibitem{AP22} S. Asai and C. Pfeifer, Wide subcategories and lattices of torsion classes, Algebr. Represent. Theory 25
(2022), no. 6, 1611--1629.

\bibitem{BKT14} P. Baumann, J. Kamnitzer and P. Tingley, Affine Mirkovi\'{c}-Vilonen polytopes, Publ. Math. Inst. Hautes
\'{E}tudes Sci. 120 (2014), 113--205.

	\bibitem{BBD82} A. A. Be\u ilinson, J. Bernstein and P. Deligne, Faisceaux pervers, in Analysis and topology on
	singular spaces, I (Luminy, 1981), 5--171, Ast\'erisque, 100, Soc. Math. France, Paris, 1982.

\bibitem{BB80} S. Brenner and M. C. R. Butler, Generalizations of the Bernstein-Gelfand-Ponomarev reflection functors,
Representation theory, II (Proc. Second Internat. Conf., Carleton Univ., Ottawa, Ont., 1979), pp. 103--
169, Lecture Notes in Math., 832, Springer, Berlin-New York, 1980.

\bibitem{Chen12} X. W. Chen, A recollement of vector bundles, Bull. Lond. Math. Soc. 44 (2012), 271--284.

\bibitem{CS17} W. Crawley-Boevey and J. Sauter, On quiver Grassmannians and orbit closures for
 representation-finite algebras, Math. Z. 285 (2017), 367--395.


\bibitem{DIG19} L. Demonet, O. Iyama and G. Jasso, tau-tilting finite algebras, bricks, and g-vectors, Int. Math. Res. Not.
IMRN 2019, no. 3, 852--892.

	\bibitem{Dic66} S. E. Dickson, A torsion theory for Abelian categories, Trans. Am. Math. Soc. 121 (1966), 223--235.

	\bibitem{E22} H. Enomoto, Rigid modules and ICE-closed subcategories in quiver representations, J. Algebra 594 (2022),
 364--388.
	\bibitem{E21} H. Enomoto, Monobrick, a uniform approach to torsion-free classes and wide subcategories, Adv. Math. 393 (2021), Paper No. 108113, 41 pp.

\bibitem{E23} H. Enomoto, From the lattice of torsion classes to the posets of wide subcategories and ICE-closed
subcategories, Algebr. Represent. Theory 26 (2023), 3223--3253.

\bibitem{ES23} H. Enomoto and A. Sakai, Image-extension-closed subcategories of module categories of hereditary algebras,
J. Pure Appl. Algebra 227 (2023), no. 9, 107372.

\bibitem{ES22} H. Enomoto and A. Sakai, ICE-closed subcategories and wide tau-tilting modules, Math. Z. 300 (2022), no. 1,
541--577.

	\bibitem{FZ17} J. Feng and P. Zhang, Types of Serre subcategories of Grothendieck categories, J. Algebra 508 (2017), 16--34.

	\bibitem{FP04} V. Franjou and T. Pirashvili, Comparison of Abelian categories recollements, Doc. Math. 9 (2004),
41--56.
	
\bibitem{GKP21}  N. Gao, S. K\"{o}nig and C. Psaroudakis, Ladders of recollements of abelian categories,
 J. Algebra 579 (2021), 256--302.

	\bibitem{GM22} N. Gao and J. Ma, (Gorenstein) silting modules in recollements, arXiv:2209.00531.


	\bibitem{ref8} E. L. Green and C. Psaroudakis, On Artin Algebras Arising from Morita Contexts, Algebr. Represent. Theory 17 (2014), no. 5, 1485--1525.

	\bibitem{HNW24} R. Hafezi, A. Nasr-Isfahani and J. Q. Wei, tau-tilting theory via the morphism category of projective modules I: ICE-closed subcategories,
arXiv:2410.17965.

\bibitem{Han24} E. J. Hanson, Sequences of ICE-closed subcategories via preordered tau-rigid modules, arXiv:2410.01963.

\bibitem{Hov01} M. Hovey, Classifying subcategories of modules, Tran. Amer. Math. Soc. 353 (2001),
3181--3191.

\bibitem{Kuhn94} N. J. Kuhn, Generic representations of the finite general linear groups and the Steenrod algebra: II, K.
Theory 8 (1944), no. 4, 395--428.


	\bibitem{LG24} X. Li and H. P. Gao, ICE-closed subcategories and epibricks over one-point extensions, arXiv:2401.05645.

\bibitem{LVY14} Q. H. Liu, J. Vit\'{o}ria and D. Yang, Gluing silting objects, J. Nagoya Math. 216 (2014),
117--151.
	
	\bibitem{Lu17} M. Lu, Gorenstein defect categories of triangular matrix algebras, J. Algebra 480 (2017), 346--367.

   \bibitem{MS17} F. Marks and J. \v{S}t'ov\'{i}\v{c}ek, Torsion classes, wide subcategories and localisations, Bull. London Math. Soc. 49
(2017), no. 3, 405--416.

	\bibitem{Psa14} C. Psaroudakis, Homological theory of recollements of abelian categories, J. Algebra 398 (2014),
	63--110.

 \bibitem{Psa18} C. Psaroudakis, A Representation-Theoretic Approach to Recollements of Abelian Categories, in Surveys in Representation
 Theory of Algebras, Contemporary Mathematics, 716, Amer. Math. Soc., Providence, RI, 2018, 67--154.


	\bibitem{PV14}  C. Psaroudakis and J. Vit\'{o}ria, Recollements of module categories, Appl. Categ. Struct. 22 (2014), no. 4,
579--593.


 \bibitem{Ska23} A. Sakai, Classifying t-structures via ICE-closed subcategories and a lattice of torsion classes, arXiv:2307.11347.


	\bibitem{TO79} H. Tachikawa and K. Ohtake, Colocalization and localization in Abelian categories, J. Algebra 56 (1979),
	1--23.

	\bibitem{Z17} Y. Y. Zhang. Reduction of wide subcategories and recollements,  Algebra Colloq. 30 (2023), no. 4, 713--720.
	
	
\bibitem{Z19} Y. Y. Zhang. A construction of support tau-tilting modules ove tau-tilting finite algebras, arXiv:1908.02245.
	
	

\end{thebibliography}
\end{document}